\tikzstyle{white with black border}=[fill=white, draw=black, shape=circle]
\tikzstyle{black with white font}=[white, fill=black, draw=black, shape=circle, font={\small}, inner sep=0pt]
\tikzstyle{black simple style}=[fill=black, draw=black, shape=circle]
\tikzstyle{arrow}=[->]
\tikzstyle{new edge style 0}=[-, fill=none]
\newcommand{\Z}{\mathbb{Z}}
\newcommand{\calA}{\mathcal{A}}
\newcommand{\calC}{\mathcal{C}}
\newcommand{\calH}{\mathcal{H}}
\newcommand{\calM}{\mathcal{M}}
\newcommand{\calO}{\mathcal{O}}
\newcommand{\CC}{\mathscr{C}}
\DeclareMathOperator{\Aut}{Aut}
\DeclareMathOperator{\Hom}{Hom}
\DeclareMathOperator{\Jac}{Jac}
\DeclareMathOperator{\Jacc}{\Jac(\mathcal{C})}
\DeclareMathOperator{\Spec}{Spec}
\newcommand{\Mtwo}{M_2}
\newcommand{\nor}{\calC_{E_1 \times_{\mathbb{P}^1} E_2}}
\newcommand{\GL}{\operatorname{GL}}
\newcommand{\PGL}{\operatorname{PGL}}
\newcommand{\SL}{\operatorname{SL}}
\newcommand{\floor}[1]{\left\lfloor #1 \right\rfloor}
\newcommand{\mell}{\mathcal{M}_{\text{ell}}}
\newcommand{\mellc}{\overline{\mathcal{M}}_{\text{ell}}}
\newcommand{\mells}{\mathcal{M}_{\text{ell}}/C_2}
\newcommand{\mquot}{\mathcal{M}/C_2}
\numberwithin{equation}{section}
\newtheorem{theorem}{Theorem}[section]  % master counter
\newtheorem{proposition}[theorem]{Proposition}
\newtheorem{corollary}[theorem]{Corollary}
\newtheorem{question}[theorem]{Question}
\theoremstyle{definition}
\newtheorem*{acknowledgements}{Acknowledgements}
\theoremstyle{remark}
\newtheorem{remark}[theorem]{Remark}
\definecolor{darkestblue}{HTML}{03045E}
\definecolor{darkgreen}{HTML}{006400}
\definecolor{darkblue}{HTML}{0077B6}
\definecolor{darkestred}{HTML}{c42706}
\definecolor{darkred}{HTML}{d44906}
\definecolor{lighterblue}{HTML}{00B4D8}
\DeclareRobustCommand{\SkipTocEntry}[5]{}
\newcommand{\xdashrightarrow}[2][]{\ext@arrow 0359\rightarrowfill@@{#1}{#2}}
\newcommand{\xdashleftarrow}[2][]{\ext@arrow 3095\leftarrowfill@@{#1}{#2}}
\newcommand{\xdashleftrightarrow}[2][]{\ext@arrow 3359\leftrightarrowfill@@{#1}{#2}}
\def\rightarrowfill@@{\arrowfill@@\relax\relbar\rightarrow}
\def\leftarrowfill@@{\arrowfill@@\leftarrow\relbar\relax}
\def\leftrightarrowfill@@{\arrowfill@@\leftarrow\relbar\rightarrow}
\def\arrowfill@@#1#2#3#4{%
  $\m@th\thickmuskip0mu\medmuskip\thickmuskip\thinmuskip\thickmuskip
   \relax#4#1
   \xleaders\hbox{$#4#2$}\hfill
   #3$%
}
\begin{document}
\begin{abstract}
We compute the intersections between the automorphism strata and the pullback by the Torelli map of the Ekedahl-Oort strata inside the moduli space of genus two curves. We first describe explicitly which possible automorphism groups a genus two curve can have over a field of positive characteristic, and parametrise the families of curves with a prescribed automorphism group. Then, we describe an algorithm to compute the strata of genus two curves whose Jacobian variety has a fixed Ekedahl-Oort type. Finally, we compute the dimension and number of irreducible components of the intersections between the strata.
\end{abstract}

\title{Intersections of the automorphism and the Ekedahl-Oort strata in $M_2$}

\author{Alvaro Gonzalez-Hernandez}

\address{Mathematics Institute\\
    University of Warwick\\
    CV4 7AL \\
    United Kingdom\\}

\email{
\href{mailto:alvaro.gohe@outlook.com}{alvaro.gohe@outlook.com} 
}

\thanks{\emph{Website:} \url{https://alvarogohe.github.io}}
\keywords{Genus 2 curves, abelian surfaces, moduli spaces, automorphism strata, Ekedahl-Oort strata, positive characteristic}
\subjclass[2020]{11G15, 11G20, 14K10 (Primary),  11G10, 14G17, 14K15 (Secondary)}
\maketitle

\section{Introduction}
The goal of this paper is to answer the following question: given a prime $p$, $0\leq f,a\leq 2$, and a group $G$, is there a genus two curve over a field of characteristic $p$ that has automorphism group $G$ and whose Jacobian variety has $p$-rank $f$ and $a$-number $a$? If the answer is affirmative, we would also like to obtain more information about the set of curves satisfying these conditions.\\

One possible approach is to study the locus of curves satisfying these properties inside the coarse moduli space of genus two curves $M_2$. The reason why this approach is convenient is because an explicit description of this moduli space is known, and we can perform computations in it using mathematical software, in our case Magma \cite{Bosma1997TheLanguage}. The code is available in \href{https://github.com/AlvaroGohe/Intersections-of-strata-in-the-moduli-space-of-genus-2-curves}{this repository}.\\

In Section \ref{M2_section}, we review the construction of an explicit model for the coarse moduli space of genus two curves using the Igusa invariants.\\

In Section \ref{automorphism_section}, we survey what the possible automorphism groups of a genus two curve are in every characteristic.  This section is of independent interest, as we provide a detailed description of the geometry of the automorphism strata and compute universal models for the curves in each stratum, extending the models described by Cardona and Quer \cite{Cardona2007CurvesD12} to work uniformly in every characteristic. In the case of curves with automorphism group $C_2^2$, we prove that this stratum can be related to a moduli space parametrising pairs of elliptic curves with level $2$ structures.\\

In Section \ref{EO_section}, we recall the description of the Ekedahl-Oort strata of the moduli space of abelian surfaces, and explain how to compute these strata from the description of the Hasse-Witt matrix.\\

Finally, in Section \ref{intersections_sec}, we present the main results of the chapter, which are tables containing the dimensions and the number of irreducible components of the strata corresponding to the intersections of the automorphism and the pullback of the Ekedahl-Oort strata inside $M_2$. \\

\begin{acknowledgements}
 I would like to thank Damiano Testa for his invaluable guidance and support. The idea for this project came after attending the course \textit{Geometry and arithmetic of moduli spaces of abelian varieties in positive characteristic} at the Arizona Winter School 2024. I would like to thank Valentijn Karemaker for teaching this course and for providing helpful comments on this paper. I was supported by the Warwick Mathematics Institute Centre for Doctoral Training and I gratefully acknowledge funding from the University of Warwick.
\end{acknowledgements}

\section{The coarse moduli space of genus two curves} \label{M2_section}
Let $M_2$ denote the coarse moduli space of genus two curves. Assuming that we are working over a field $k$ of characteristic not two, then, over the algebraic closure of $k$ every genus two curve can be written in the form
\begin{align} \label{genus2_eq}
y^2=\prod_{i=1}^6(x-\lambda_i),
\end{align}
for some $\lambda_i\in\overline{k}$ which are all distinct. In 1960, Igusa computed a set of five functions, now known as the \textbf{Igusa invariants} $\{J_2, J_4, J_6, J_8, J_{10}\}$, that determine a genus two curve up to isomorphism \cite{Igusa1960ArithmeticTwo}. These invariants are defined as symmetric functions depending on the $\lambda_i$ in the following way: \\

For any permutation $\sigma\in S_6$, we define $(ij)_\sigma:=(\lambda_{\sigma(i)}-\lambda_{\sigma(j)})$. Let $s_1,s_2,s_3$ and $s_4$ be the following symmetric polynomials of degrees $6$, $12$, $18$ and $30$:
\begin{align*}
s_1&= \sum_{\sigma\in S_6}(12)^2_\sigma(34)_\sigma^2(56)^2_\sigma,\\
s_2&=\sum_{\sigma\in S_6}(12)^2_\sigma(13)^2_\sigma(23)_\sigma^2(45)_\sigma^2(46)^2_\sigma(56)^2_\sigma,\\
s_3&=\sum_{\sigma\in S_6}(12)^2_\sigma(13)^2_\sigma(14)^2_\sigma(23)_\sigma^2(25)_\sigma^2(36)_\sigma^2(45)_\sigma^2(46)^2_\sigma(56)^2_\sigma,\\
s_4&=\prod_{1\leq i<j\leq6}(ij)_{\mathrm{id}}^2=\mathrm{disc}\Big(\prod_{i=1}^6(x-\lambda_i)\Big).\\
\end{align*}
Then, the Igusa invariants are defined by
\begin{align*}
J_2&=\tfrac{1}{2^3\, 3} s_1,\\
J_4&=\tfrac{1}{2^9\, 3^3} s_1^2-\tfrac{1}{3^3}s_2,\\
J_6&=\tfrac{1}{2^{13}\, 3^6} s_1^3+\tfrac{5}{2^{4}\, 3^6}s_1 s_2-\tfrac{2^4}{3^3}s_3,\\
J_8&=\tfrac{1}{2^{20}\, 3^7}s_1^4+\tfrac{13}{2^{10}\, 3^7}s_1^2 s_2-\tfrac{1}{2^1 3^4}s_1s_3-\tfrac{1}{2^{2}\, 3^6}s_2^2,\\
J_{10}&=2^8 s_4.\\
\end{align*}
Some sources define these invariants slightly differently, but we have chosen the definition that matches the implementation of the Igusa invariants in Magma \cite{Mestre1991ConstructionModules}.\\

As these only depend on symmetric polynomials of the $x$-coordinates of the Weierstrass points of the curve, one can easily check that the Igusa invariants can also be expressed in terms of the coefficients of the curve. Therefore, they are defined over the base field of the curve. The definition of these Igusa invariants extends to curves defined over any field of positive characteristic, in particular, over fields of characteristic two where genus two curves cannot be written as in the equation \ref{genus2_eq}.\\

These invariants are not algebraically independent, as for any curve we have the relation
\begin{align*}
J_4^2-J_2J_6+4J_8=0.\\
\end{align*}
Since our genus two curves are smooth, their discriminants do not vanish and, therefore, $J_{10}\neq 0$.
Let $\mathfrak{X}=\mathbb{V}(J_4^2-J_2J_6+4J_8)$. We can then define a morphism
\begin{align*}
M_2&\longrightarrow \mathfrak{X}\subset\mathbb{P}(1,2,3,4,5)\\
\mathcal{C}&\longmapsto[J_2(\calC):J_4(\calC):J_6(\calC):J_8(\calC):J_{10}(\calC)]
\end{align*}
and the image of $M_2$ inside $\mathbb{V}(J_4^2-J_2J_6+4J_8)$ corresponds to the open subvariety $\mathfrak{X}_0\subset\mathfrak{X}$ given by $J_{10}\neq0$. As stated before, the Igusa invariants determine a genus two curve up to isomorphism, and so, there is an inverse rational map from $\mathfrak{X}$ to $M_2$. Furthermore, the Deligne-Mumford compactification $\overline{M}_2$ is isomorphic to a blow-up of $\mathfrak{X}$ \cite[Théorème 2]{Liu1993CourbesModules}.\\

The method to recover a curve from its Igusa invariants has been described by Mestre \cite{Mestre1991ConstructionModules}. Note that, in general, given a set of Igusa invariants satisfying $J_4^2-J_2J_6+4J_8=0$ over a field $k$ that is not algebraically closed, it may not be possible to find a curve defined over $k$ with those invariants. \\

Let $\calA_2$ denote the coarse moduli space of principally polarised abelian surfaces. We define the \textbf{Torelli morphism} to be the map from $M_2$ to $\calA_2$ sending each curve to its Jacobian variety. This morphism is injective and its image is dense in $\mathcal{A}_2$ \cite{Milne1986JacobianVarieties}.\\

Since the moduli spaces $\mathcal{A}_g$ were first described, there has been significant interest in understanding their stratifications. These are decompositions of $\mathcal{A}_g$ into locally closed subsets known as \textbf{strata}, which correspond to abelian surfaces with specific geometric or arithmetic properties. For $g=2$, the Torelli morphism provides a means of translating these properties into the language of curves, allowing one to study the corresponding loci in $M_2$ as preimages of the relevant strata in $\mathcal{A}_2$.\\

\section{The automorphism group stratification} \label{automorphism_section}
From now on, we assume that we are working over an algebraically closed field $k$. Then, a genus two curve defined over a field of characteristic $p\geq0$ has one of the following automorphism groups \cite{Igusa1960ArithmeticTwo}:
\begin{itemize}
    \item \href{https://beta.lmfdb.org/Groups/Abstract/2.1}{$C_2$}, which happens generically.
    \item \href{https://beta.lmfdb.org/Groups/Abstract/4.2}{$C_2^2$}.
    \item \href{https://beta.lmfdb.org/Groups/Abstract/8.3}{$D_4$}, the dihedral group of order $8$, if $p\neq2$.
    \item \href{https://beta.lmfdb.org/Groups/Abstract/10.2}{$C_{10}$}, if $p\neq 2,5$.
    \item \href{https://beta.lmfdb.org/Groups/Abstract/12.4}{$D_{6}$}, the dihedral group of order $12$.
    \item \href{https://beta.lmfdb.org/Groups/Abstract/24.8}{$C_3\rtimes D_4$}, which has order $24$, if $p\neq 2,3,5$.
    \item \href{https://beta.lmfdb.org/Groups/Abstract/48.29}{$\GL_2(\mathbb{F}_3)$}, which has order $48$, if $p\neq 2,5$.\\
\end{itemize}
If $p=2$, we have the additional possibilities:
\begin{itemize}
    \item \href{https://beta.lmfdb.org/Groups/Abstract/32.51}{$C_2^5$}, which has order $32$.
    \item \href{https://beta.lmfdb.org/Groups/Abstract/160.235}{$C_2\wr C_5$}, which has order $160$.\\
\end{itemize}
If $p=5$, we also have:
\begin{itemize}
    \item \href{https://beta.lmfdb.org/Groups/Abstract/120.5}{$\SL_2(\mathbb{F}_5)$}, which has order $120$.\\
\end{itemize}

A consequence of the injectivity of the Torelli morphism is that every automorphism of $\Jacc$ preserving the polarisation arises from a unique automorphism of the curve $\calC$. Conversely, as every genus two curve is hyperelliptic, one can check that every automorphism of $\calC$ comes from a polarised automorphism of $\Jacc$ \cite[Theorem 12.1]{Milne1986JacobianVarieties}. Therefore, $\Aut(\calC)$ is isomorphic to the group of automorphisms of $\Jacc$ which preserve the polarisation. \\

Let $G$ denote one of the previous groups. Then, the subsets of curves whose automorphism group contains $G$ as a subgroup
\begin{align*}
W_{\geq G}=\{\calC\in \Mtwo(k): \Aut(\calC)\geq G\}
\end{align*}
are closed subschemes of $\Mtwo$. This can be seen from the fact that over algebraically closed fields, isomorphic curves have isomorphic automorphism groups and that the necessary conditions for a curve to admit a certain automorphism can be described in terms of the vanishing of polynomials involving the coefficients of the curve. Igusa proved that the subschemes $W_{\geq G}$ are equidimensional and their dimensions are the following:
\begin{table}[H]
\centering

\begin{tabular}{lccc|c|cc|c|cc|c|}
\cline{5-11}
                                                          &                                                    &                                                              &       & \cellcolor[HTML]{a7c957}$p\neq2$ & \multicolumn{2}{c|}{\cellcolor[HTML]{a7c957}$p\neq 2,5$}                      & \cellcolor[HTML]{a7c957}$p\neq 2,3,5$ & \multicolumn{2}{c|}{\cellcolor[HTML]{a7c957}$p=2$}                  & \cellcolor[HTML]{a7c957}$p=5$ \\ \hline
\rowcolor[HTML]{FFFFFF} 
\multicolumn{1}{|l|}{\cellcolor[HTML]{a7c957}$G$}         & \multicolumn{1}{c|}{\cellcolor[HTML]{ffffff}$C_2$} & \multicolumn{1}{c|}{\cellcolor[HTML]{ffffff}$C_2^2$} & $D_6$ & $D_4$                            & \multicolumn{1}{c|}{\cellcolor[HTML]{ffffff}$C_{10}$} & $\GL_2(\mathbb{F}_3)$ & $C_3\rtimes D_4$                      & \multicolumn{1}{c|}{\cellcolor[HTML]{ffffff}$C_2^5$} & $C_2\wr C_5$ & $\SL_2(\mathbb{F}_5)$         \\ \hline
\multicolumn{1}{|l|}{\cellcolor[HTML]{a7c957}$\dim(W_{\geq G})$} & \multicolumn{1}{c|}{3}                             & \multicolumn{1}{c|}{2}                                       & 1     & 1                                & \multicolumn{1}{c|}{0}                                & 0                     & 0                                     & \multicolumn{1}{c|}{1}                               & 0            & 0                             \\ \hline
\end{tabular}

\caption{Dimensions of the automorphism strata.}

\end{table}

\subsection{The zero-dimensional strata} \label{zerostrata_subsection}
Each of the zero-dimensional strata corresponds to a unique curve:
\begin{enumerate}
    \item Up to isomorphism, in characteristics not two or five, the unique curve with automorphism group $C_{10}$ is
    \begin{align*}
    y^2+y=x^5,
    \end{align*}
    which corresponds to the point $[0:0:0:0:1]$ in $\mathfrak{X}_0\subset\mathbb{P}(1,2,3,4,5)$. \\
    \item Up to isomorphism, in characteristics not two or five, the unique curve with automorphism $\GL_2(\mathbb{F}_3)$ is
    \begin{align*}
    y^2=x^5-x.
    \end{align*}
    In $\mathfrak{X}_0$, this curve corresponds to the point $[20: 30: -20: -325: 64]$.\\
    
    \item Up to isomorphism, in characteristics not two, three or five, the unique curve with automorphism group $C_3\rtimes D_4$ is
    \begin{align*}
    y^2=x^6+1.
    \end{align*}
     In $\mathfrak{X}_0$, this curve corresponds to the point $[120: 330: -320:-36825: 11664]$.\\
     
    \item Up to isomorphism, the unique curve in characteristic two with automorphism $C_2\wr C_5$ is
    \begin{align*}
    y^2+y=x^5.
    \end{align*}
 In $\mathfrak{X}_0$, this curve corresponds to the point $[0:0:0:0:1]$.\\
 
    \item Up to isomorphism, the unique curve in characteristic five with automorphism $\SL_2(\mathbb{F}_5)$ is
    \begin{align*}
    y^2=x^5-x.
    \end{align*}
    In $\mathfrak{X}_0$, this curve corresponds to the point $[0:0:0:0:1]$.\\
\end{enumerate}

\subsection{Describing the other automorphism strata}
The only strata that are left are the ones corresponding to the groups $C_2^2$, $D_4$ and $D_6$ in characteristic not two and $C_2^5$ in characteristic two.
The key idea to explicitly construct these strata is that we can characterise these automorphism groups from the presence of elements of a certain order. More specifically,
\begin{itemize}
    \item The genus two curves with automorphism group containing $C_2^2$ are those that admit a non-hyperelliptic automorphism of order two.
    \item  The genus two curves with automorphism group containing $D_4$ are those that admit an automorphism of order four.
    \item The genus two curves with automorphism group containing $D_6$ are those that admit an automorphism of order three.\\
\end{itemize}
In characteristic two, the curves in the stratum $W_{\geq C_2^5}$ can be easily described, as we will see in Subsection \ref{C2^5_subsection}.\\

Given any genus two curve, we can always transform it to one of the form
\begin{align*}
y^2+\sum_{i=0}^3g_ix^i y=\sum_{j=0}^6f_j x^j.
\end{align*}
for some $f_i,g_j\in k$. Genus two curves are always hyperelliptic, and therefore, always have a hyperelliptic involution $\iota$, which in this model it is given by
\begin{align*}
\iota\colon\quad y\mapsto -y-\sum_{i=0}^3g_ix^i.
\end{align*}

In order to construct the strata, we will assume that this general curve admits one of the special automorphisms stated above. In order for this to be possible, the coefficients $g_i$ and $f_j$ will have to satisfy certain relations. Then, using the description of the Igusa invariants in terms of the coefficients of the curve, we can obtain the equations for the locus of curves in $M_2$ satisfying these relations.\\

\subsection{The stratum of curves with automorphism group \texorpdfstring{$C_2^2$}{C2²}} \label{C22_section}
Suppose that a general genus two curve admits the automorphism
\begin{align} \label{order2inV4_eq}
\sigma\colon\quad\begin{cases}
    x\mapsto -x-1,\\
    y\mapsto y.
    \end{cases}
\end{align}

Then, we can always write such a curve in the form
\begin{align} \label{V4_eq}
\calC\colon\quad y^2+(g_1x(x+1)+g_0)y=f_3 x^3(x+1)^3+(f_2-f_1)x^2(x+1)^2+f_1x(x+1)+f_0
\end{align}
for some choice of values of $\{g_0,g_1,f_0,f_1,f_2,f_3\}$.\\

\begin{proposition} \label{strat_prop}
If the characteristic is two, $W_{\geq C_2^2}$ is given by $\mathbb{V}(h_{8},h_{9})\subset\mathfrak{X}_0$ where
\begin{align*}
h_8&=J_4^4 + J_4J_6^2 + J_2^2J_4J_8 + J_2^3J_{10},\\
h_9&=J_4^3J_6 + J_6^3 + J_2J_4^2J_8 + J_2^2J_4J_{10}.
\end{align*}
If the characteristic of the base field is not two, $W_{\geq C_2^2}$ is given by $\mathbb{V}(h_{15})\subset\mathfrak{X}_0$ where $h_{15}$ is a polynomial of weighted degree 15.\\
\end{proposition}
\begin{proof}
The same strategy employed here will later be used to derive explicit equations for the strata corresponding to Propositions \ref{j_prop}, \ref{D4_prop}, and \ref{D6_prop}. The Magma code used for all computations is available in the repository associated with this paper.\\

Using Magma, we first computed the expression for the Igusa invariants $J_i$ of the family of curves described in equation \ref{V4_eq} as polynomials in the variables $\{g_0,g_1,f_0,f_1,f_2,f_3\}$ with rational coefficients. We then determined the relations among these invariants, thus finding an expression for $W_{\geq C_2^2}$ as the vanishing locus of a polynomial $h_{15}\in\mathbb{Z}[J_2,J_4,J_6,J_8,J_{10}]$. This degree 15 polynomial, previously described by Shaska and Völklein \cite{Shaska2004EllipticFields}, characterizes the strata in characteristic zero. Since one can check that $\mathbb{V}(h_{15})$ is geometrically irreducible over $\mathbb{Q}$, and as the property of being geometrically irreducible is an open condition on the base scheme $\Spec(\mathbb{Z})$, it follows that for all but finitely many primes, the same polynomial $h_{15}$ over $\mathbb{F}_p$ defines the strata $W_{\geq C_2^2}$ in characteristic $p$. One can manually check the relations for the first hundred primes to find that the description for $W_{\geq C_2^2}$ only changes when the characteristic is two. 
\end{proof}\vspace{10pt}

\subsubsection{Parametrisation of the stratum} \label{par_subsection}
As mentioned in the proof of Proposition \ref{strat_prop}, regardless of the characteristic of the base field of definition, $W_{\geq C_2^2}$ is an irreducible surface inside $M_2$. Moreover, the singular subscheme of $W_{\geq C_2^2}$ corresponds to the union of all the strata $W_{\geq     H}$ with $H> C_2^2$, which, if the characteristic of the base field is two is $W_{\geq C_2^5}$, and if the characteristic of the base field is not two is $W_{\geq D_4}\cup W_{\geq D_6}$.\\

In characteristic two, we can see that $W_{\geq C_2^2}$ is rational, as we can compute a rational map
\begin{align*}
\mathbb{P}^2&\longrightarrow W_{\geq C_2^2}\subset\mathbb{P}(1,2,3,4,5)\\
[x_0:x_1:x_2]&\longmapsto[x_0:x_0 x_1:x_0 x_1^2: x_0^3x_2:x_1(x_0x_1^3 + x_1^4 + x_0^3x_2)],
\end{align*}
with inverse
\begin{align*}
W_{\geq C_2^2}&\longrightarrow \mathbb{P}^2\\
[J_2:J_4:J_6:J_8:J_{10}]&\longrightarrow [J_2^4:J_2^2J_4:J_8].\\
\end{align*}

In characteristic zero, it is also known that $W_{\geq C_2^2}$ is rational, for instance, by the work of Kumar \cite{Kumar2015HilbertFields}. Through similar methods, in every characteristic that is not two, we have computed a morphism
\begin{align*}
\mathbb{P}
(1,1,2)&\longrightarrow W_{\geq C_2^2}\subset\mathbb{P}(1,2,3,4,5)
\end{align*}
whose inverse is defined outside the singular locus of $W_{\geq C_2^2}$. This implies, in particular, that $W_{\geq C_2^2}$ is birationally equivalent to $\mathbb{P}
(1,1,2)$, and therefore rational.\\

Let $\sigma$ as in equation $\ref{order2inV4_eq}$ and $\calC\in W_{\geq C_2^2}$. Then, the quotients of $\calC$ by either the subgroup $\langle \sigma\rangle\subset\Aut(\calC)$ or  $\langle \iota \sigma\rangle\subset\Aut(\calC)$ are genus one curves, as both quotient maps ramify at two points. There is a natural choice that gives these quotients an elliptic curve structure, which is, for $\calC/\langle \sigma\rangle$, to set as the point at infinity the image of the fixed points by $\iota\sigma$, and for $\calC/\langle \iota\sigma\rangle$, to set as the point at infinity the image of the fixed points by $\sigma$. \\

An explicit model for $E_1=\calC/\langle \sigma\rangle$ is
\begin{align*}
E_1\colon\quad y^2+(g_1 x+f_3g_0)y=x^3+(f_2-f_1)x^2+f_1f_3x+f_0f_3^2,
\end{align*}
where the quotient morphism $\pi_1$ is given by 
\begin{align*}
\pi_1\colon\quad\calC&\longrightarrow E_1\\
(x,y)&\longmapsto (f_3x(x+1),f_3y)\\
\end{align*}
As for $E_2=\calC/\langle \iota\sigma\rangle$, if we let
\begin{align*}
\lambda=64f_0 - 20f_1 + 4f_2 - f_3 + 16g_0^2 - 8g_0g_1 + g_1^2,
\end{align*}
an explicit model of $E_2$ is given by
\begin{align*}
E_2\colon\quad y^2+((g_1-4g_0 )x-\lambda g_0)y=x^3+( 48f_0 - 9f_1 + f_2 + 8g_0^2 - 2g_0g_1)x^2+\lambda (12f_0 - f_1 + g_0^2)x+\lambda^2 f_0.\\
\end{align*}

Here,
\begin{align*}
\pi_2\colon\quad\calC&\longrightarrow E_2\\
(x,y)&\longmapsto \left(\frac{-\lambda x(x+1)}{(2x+1)^2}, \frac{\lambda(y+(x+1)(g_1 x(x+1)+g_0))}{(2x+1)^3}\right)\\
\end{align*}
Let $\gamma$ be a non-hyperelliptic involution in $\Jacc$, let $K_\calC$ denote the canonical divisor of $\calC$ and let
\begin{align*}
E_\gamma=\{D\in\Jacc:  D=(P)+(\gamma(P))-K_\calC\text{ with } P\in\calC\}.\\
\end{align*} 
Then, we have two isomorphisms $i_1:E_1\rightarrow E_\sigma $ and $i_2:E_2\rightarrow E_{\iota\sigma}$. Here is where we can see that our choice of the point at infinity in the $E_i$ is very natural, as the points $Q$ that are fixed by $\iota\gamma$ satisfy that $Q=\iota\gamma(Q)$ and so,
\begin{align*}
(Q)+(\gamma(Q))-K_\calC=(\iota\gamma(Q))+(\gamma(Q))-K_\calC=K_\calC-K_\calC=0.\\
\end{align*}

We therefore can construct an isogeny
\begin{align*}
\psi\colon\quad E_1\times E_2 &\longrightarrow \Jacc \\
(P_1,P_2)&\longmapsto i_1(P_1)+i_2(P_2)
\end{align*}
whose kernel is a subgroup of $\Jacc$ isomorphic to $(\mathbb{Z}/2\mathbb{Z})^2$ \cite{Artebani2014OnSurface}.\\

Let $j(E_1)$ and $j(E_2)$ be the $j$-invariants of $E_1$ and $E_2$ respectively. Then, using Magma one can check that for some $q_0, q_1, q_2\in k[\mathfrak{X}_0]$, $j(E_1)$ and $j(E_2)$ are both solutions of the quadratic equation
\begin{align*}
q_2j^2+q_1j+q_0=0.\\
\end{align*}

\begin{proposition} \label{j_prop}
In characteristic two, $j(E_1)=j(E_2)$ only when $J_2=0$, which corresponds to the stratum $W_{\geq C_2^5}$. In the rest of the characteristics, the subvariety of $W_{\geq C_2^2}$ for which $j(E_1)=j(E_2)$ forms two curves inside $\mathfrak{X}_0$, one of which corresponds to $W_{\geq D_4}$, and the other corresponding to the curve $Z=\mathbb{V}(h_5,h_6)$ where
\begin{align*}
h_5&=J_2^5 - 56J_2^3J_4 + 912J_2J_4^2 - 3456J_4J_6 + 576J_2J_8 + 17408J_{10},\\
h_6&=3J_2^4J_4 - 150J_2^2J_4^2 + 1871J_4^3 + 27J_6^2 + 73J_2^2J_8 - 1764J_4J_8 + 
        1904J_2J_{10}.
\end{align*}\newpage
If the characteristic of the base field is 17, $Z$ is given instead by $\mathbb{V}(h_3,h_{10})\subseteq\mathfrak{X}_0$ where
\begin{align*}
h_3 &= J_2^3 + 2J_2J_4 + 8J_6,\\
h_{10} &= 5J_4^5 + 10J_4^2J_6^2 + J_2^4J_4J_8 + 3J_4^3J_8 + 16J_6^2J_8 + 16J_4J_8^2 + 5J_2J_4^2J_{10} + 9J_4J_6J_{10} + 2J_2J_8J_{10} + 11J_{10}^2.\\
\end{align*}
\end{proposition}
\begin{proof}
The locus where $j(E_1)=j(E_2)$ corresponds to $\mathbb{V}(q_1)$ if the characteristic of the base field is two, and $\mathbb{V}(q_1^2-4q_1q_2)$ otherwise. One can check that in the second case, this is a reducible scheme that decomposes into the two curves $W_{\geq D_4}$. If the characteristic is zero, these two curves have models over $\mathbb{Z}$ and in all characteristics except when $p=17$, the equations for these curves are the reduction of the equations for these curves in characteristic zero.
\end{proof}

\subsubsection{Connection with the moduli space of pairs of elliptic curves with level 2 structures} \label{connection_subsection}
In this subsection, let us assume that the characteristic of the base field is not two.\\

We have seen that for each curve $\calC\in W_{\geq C_2^2}$, we can associate a pair of elliptic curves, namely, the quotients of $\calC$ by two non-hyperelliptic involutions $\sigma$ and $\iota\sigma$. A natural question is, is this pair of curves uniquely determined by the choice of $\calC$? The answer is no: if $\Aut(\mathcal{C}) = D_4$, then there are multiple choices of non-hyperelliptic involutions, and the resulting quotients can give rise to non-isomorphic pairs of elliptic curves, depending on the choice. An explicit example can be found in the accompanying Magma file in the repository.\\

We can also ask about the converse: given two elliptic curves, can we construct a genus two curve that is unique up to isomorphism whose quotients are those two curves? The answer to this question is also no, as for each pair of elliptic curves $\{E_1,E_2\}$ there are often multiple ways to glue those elliptic curves along their $2$-torsion that give rise to non-isomorphic genus two curves. More specifically, given a pair of elliptic curves, generically, there are six different non-isomorphic genus two curves whose quotients are those two elliptic curves. As we will see later on, these six curves correspond to the six possible ways that we can pair the non-trivial $2$-torsion points of $E_1$ and $E_2$.\\

However, despite all of this, if we fix extra information, we can still relate the stratum $W_{\geq C_2^2}$ with a coarse moduli space parametrising pairs of elliptic curves. We will now explain how.\\

Let $\calM$ be the coarse moduli space characterising isomorphic pairs $(\calC,\sigma)$ where 
\begin{itemize}
    \item The curve $\calC$ is in $W_{\geq C_2^2}$.
    \item $\sigma$ is a choice of a non-hyperelliptic involution of $\calC$.
    \item Two pairs $(\calC,\sigma)$ and $(\calC',\sigma')$ are isomorphic if there exists an isomorphism $\alpha:\calC\rightarrow\calC'$ such that $\sigma'=\alpha\circ\sigma\circ\alpha^{-1}.$\\
\end{itemize}

Let $X(2)$ be the coarse moduli space of elliptic curves with a \textbf{level 2 structure}.
The elements of this space correspond to pairs $(E,\phi)$ where 
\begin{itemize}
    \item $E$ is an elliptic curve.
    \item $\phi$ is an isomorphism $\phi:(\mathbb{Z}/2\mathbb{Z})^2\rightarrow E[2](k)$.
    \item Two pairs $(E,\phi)$ and $(E',\phi')$ are isomorphic if there exists an isomorphism $\alpha:E\rightarrow E'$ such that $\phi'=\alpha\circ\phi.$\\
\end{itemize}
 As a coarse moduli space $X(2)$ is isomorphic to $\mathbb{A}^1\setminus\{0,1\}$. This association comes from the fact that every elliptic curve $E$ can be written in Legendre form 
\begin{align*}
E_\lambda\colon\quad y^2=x(x-1)(x-\lambda)
\end{align*}
and we can identify $\lambda\leftrightarrow(E_\lambda,\phi_\lambda)$, where the morphism $\phi_\lambda$ is set to be the one that sends the two generators of $(\mathbb{Z}/2\mathbb{Z})^2$ to $P_0=(0,0)$ and $P_1=(1,0)$ in $E_\lambda[2]$.\newpage
Note that if we fix $E$, changing the structure $\phi$ corresponds to applying one of the following transformations $f_\tau\in \PGL_2(k)$ to $\lambda$:
\begin{align*}
f_{id}(\lambda)&=\lambda, & f_{(12)}(\lambda)&=1-\lambda, & f_{(13)}(\lambda)&=\frac{1}{\lambda},\\
f_{(23)}(\lambda)&=\frac{\lambda}{\lambda-1}, & f_{(123)}(\lambda)&=\frac{1}{1-\lambda}, & f_{(132)}(\lambda)&=\frac{\lambda-1}{\lambda}.\\
\end{align*}
Here, the notation we choose is such that for $\tau_1,\tau_2\in S_3$, $f_{\tau_1}\circ f_{\tau_2}=f_{\tau_1\circ\tau_2}$ and, therefore, this description allows us to see $S_3$ as a subgroup of $\PGL_2(k)$. Now, we define $\mellc$ to be $(X(2)\times X(2))/\Aut((\mathbb{Z}/2\mathbb{Z})^2)$, i.e. the quotient of $X(2)\times X(2)$ under the relation
\begin{align*}
((E_1,\phi_1),(E_2,\phi_2))\equiv ((E_1',\phi_1'),(E_2',\phi_2'))
\end{align*}
if and only if $E_1\cong E_1'$, $E_2\cong E_2'$, and there is an automorphism $\tau\in\Aut((\mathbb{Z}/2\mathbb{Z})^2)$ such that $\phi'_1=\phi_1\circ\tau$ and $\phi'_2=\phi_2\circ\tau$. Note that an automorphism of $(\mathbb{Z}/2\mathbb{Z})^2$ corresponds uniquely to a permutation of the three non-trivial $2$-torsion points, and as such, 
$\Aut((\mathbb{Z}/2\mathbb{Z})^2)\cong S_3$. We will fix this isomorphism to be the one corresponding to the ordering of the non-trivial points of $(\mathbb{Z}/2\mathbb{Z})^2$, $\{(1,0),(0,1),(1,1)\}$, so for the rest of the chapter we can identify any $\tau\in S_3$ as an element of $\Aut((\mathbb{Z}/2\mathbb{Z})^2)$ and vice versa.  \\

In terms of the description of $X(2)$ that we have given, we can describe
$\mellc$ as the quotient variety formed by the set of pairs $(\lambda_1,\lambda_2)\in(\mathbb{A}^1\setminus\{0,1\})^2$ under the relation $(\lambda_1,\lambda_2)\equiv(\lambda_1',\lambda_2')$ if there exists a $\tau\in S_3$ such that $(\lambda_1',\lambda_2')=(f_\tau(\lambda_1),f_\tau(\lambda_2))$.\\

Let $\Delta$ be the subvariety in $\mellc$ formed by the points of the form $((E,\phi),(E,\phi))$ for some $(E,\phi)\in X(2)$ and let $\mell=\mellc\setminus\Delta$. We then have the following result:

\begin{theorem} \label{iso_theorem}
There is an isomorphism $\Phi$ between $\calM$ and $\mell$.
\end{theorem}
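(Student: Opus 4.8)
The strategy is to construct the isomorphism $\Phi\colon\calM\to\mell$ explicitly and then exhibit an inverse, checking at each step that the constructions respect the equivalence relations defining the two moduli spaces. The map $\Phi$ is essentially the one already suggested by the preceding discussion: given a pair $(\calC,\sigma)$ with $\calC\in W_{\geq C_2^2}$ and $\sigma$ a non-hyperelliptic involution, form the two quotient elliptic curves $E_1=\calC/\langle\sigma\rangle$ and $E_2=\calC/\langle\iota\sigma\rangle$, with the origins pinned down as in the excerpt. To upgrade this to a point of $X(2)\times X(2)$ rather than just a pair of bare elliptic curves, one uses the isogeny $\psi\colon E_1\times E_2\to\Jacc$ whose kernel is isomorphic to $(\Z/2\Z)^2$: projecting $\ker\psi$ to $E_1[2]$ and to $E_2[2]$ gives isomorphisms $(\Z/2\Z)^2\cong E_i[2]$, i.e. level-$2$ structures $\phi_1,\phi_2$ that are ``synchronised'' with one another. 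I would first verify that $\ker\psi\to E_i[2]$ is indeed an isomorphism (not merely injective) — this is where one uses that $\psi$ has degree $4$ and that a curve in $W_{\geq C_2^2}\setminus(W_{\geq D_4}\cup W_{\geq D_6})$ has $j(E_1)\neq j(E_2)$, so the image lands in $\mell$, away from $\Delta$. Then $\Phi(\calC,\sigma)\colonequals\big((E_1,\phi_1),(E_2,\phi_2)\big)$, and I would check this is well defined on isomorphism classes: an isomorphism $\alpha\colon\calC\to\calC'$ with $\sigma'=\alpha\sigma\alpha^{-1}$ induces isomorphisms $E_i\to E_i'$ carrying $\ker\psi$ to $\ker\psi'$, hence matching the level structures up to a common $\tau\in\Aut((\Z/2\Z)^2)$ — which is exactly the relation defining $\mellc$.

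For the inverse, I would start from a pair of Legendre curves $E_{\lambda_1},E_{\lambda_2}$ together with their standard level-$2$ structures $\phi_{\lambda_i}$ (sending the generators of $(\Z/2\Z)^2$ to $(0,0)$ and $(1,0)$), which determines a graph $\Gamma\subset E_{\lambda_1}[2]\times E_{\lambda_2}[2]$ — the graph of the isomorphism $\phi_{\lambda_2}\circ\phi_{\lambda_1}^{-1}$ — and glue: the abelian surface $(E_{\lambda_1}\times E_{\lambda_2})/\Gamma$ carries a principal polarisation (the descent of the product polarisation twisted appropriately), and since it is not a product of elliptic curves when $\lambda_1\not\equiv\lambda_2$, by Torelli it is the Jacobian of a genus-two curve $\calC$, on which the involution $(P_1,P_2)\mapsto(P_1,-P_2)$ of $E_{\lambda_1}\times E_{\lambda_2}$ descends to a non-hyperelliptic involution $\sigma$. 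Concretely, rather than invoking abstract gluing, I would use the explicit formulas already in the paper: the models for $E_1,E_2$ in terms of $\{g_0,g_1,f_0,\dots,f_3\}$ can be inverted to recover, from a normalised pair $(E_{\lambda_1},E_{\lambda_2})$, an explicit curve $\calC$ in the displayed $C_2^2$-form, giving a rational section of $\Phi$ that one then checks is a two-sided inverse. The key compatibility to verify is that the Legendre parameter attached to $E_i$ via $\phi_i$ coincides with the cross-ratio of the four branch points of $\pi_i\colon\calC\to E_i$ under the identification coming from $\ker\psi$ — i.e. that ``construct $\psi$ then read off $\phi_i$'' and ``glue along the graph of $\phi_{\lambda_2}\circ\phi_{\lambda_1}^{-1}$'' are mutually inverse.

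I expect the main obstacle to be the bookkeeping of the level-$2$ structures and the $S_3$-action, rather than anything geometrically deep. Precisely: one must check that the two natural ways of producing a ``synchronised'' pair of level structures — via $\ker\psi\hookrightarrow E_1[2]\times E_2[2]$, versus via the choice of Legendre form on each $E_i$ — differ exactly by the simultaneous $\tau$-action on both factors that is quotiented out in the definition of $\mellc$, and by nothing more. This requires pinning down how relabelling the three non-trivial points of $\ker\psi$ (equivalently, changing $\sigma$ to a conjugate non-hyperelliptic involution when $\Aut(\calC)$ is larger) permutes the branch points of $\pi_1$ and of $\pi_2$ compatibly, which is the same data as the identification $\Aut((\Z/2\Z)^2)\cong S_3$ fixed in the excerpt. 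Once this dictionary is set up correctly, both well-definedness and the inverse check become finite verifications — feasible by hand on the explicit models, and cross-checkable in Magma. A secondary point to be careful about is the behaviour over the locus $W_{\geq D_4}\cup W_{\geq D_6}$: these give points of $\mell$ with extra automorphisms, and one should confirm that restricting to $\mell=\mellc\setminus\Delta$ (as opposed to removing more) is exactly the right domain, i.e. that $j(E_1)=j(E_2)$ corresponds to $\Delta$ together with the auxiliary curve $Z$ appearing earlier, and that points of $Z$ do still lie in the image of $\Phi$.
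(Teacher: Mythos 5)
Your overall architecture is the same as the paper's (pass to the two elliptic quotients with synchronised level-$2$ structures, then glue back for the inverse), but you route both halves differently: you extract the level structures from $\ker\psi\subset E_1[2]\times E_2[2]$ rather than from an ordering of the three $\sigma$-orbits of Weierstrass points, and you build the inverse by quotienting $E_{\lambda_1}\times E_{\lambda_2}$ by a graph and invoking Torelli, where the paper instead takes the normalisation of the fibre product $E_1\times_{\mathbb{P}^1}E_2$ and identifies it with the explicit curve $\calC_{\lambda_1,\lambda_2}$. Both substitutions can be made to work, but the gluing route silently relies on a Kani-type criterion to know that $(E_{\lambda_1}\times E_{\lambda_2})/\Gamma$ is not a product of polarised elliptic curves, i.e.\ that the gluing isomorphism $E_1[2]\to E_2[2]$ does not extend to an isomorphism $E_1\to E_2$; this is exactly the hypothesis $(E_1,\phi_1)\not\cong(E_2,\phi_2)$ and needs to be cited or proved, not asserted. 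Your fallback of inverting the explicit models is essentially what the paper does, and you would still owe the verification (which the paper does via Igusa invariants) that the resulting curve depends only on the $S_3$-orbit of $(\lambda_1,\lambda_2)$.

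The genuine gap is in your argument that $\Phi$ takes values in $\mell=\mellc\setminus\Delta$. You justify this by saying that a curve in $W_{\geq C_2^2}\setminus(W_{\geq D_4}\cup W_{\geq D_6})$ has $j(E_1)\neq j(E_2)$. That is false: the paper exhibits a further curve $Z$ inside $W_{\geq C_2^2}$, distinct from $W_{\geq D_4}$, along which $j(E_1)=j(E_2)$. More importantly, avoiding $\Delta$ is not a condition on $j$-invariants but the condition $(E_1,\phi_1)\not\cong(E_2,\phi_2)$ in $X(2)$, and it must be verified for \emph{every} $(\calC,\sigma)\in\calM$, including those lying over $W_{\geq D_4}$, $W_{\geq D_6}$ and $Z$, where the $j$-invariants coincide but the level structures must be shown to differ. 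The paper's proof handles all cases at once: if $(E_1,\phi_1)\cong(E_2,\phi_2)$ then $\psi_1\circ\pi_1=\psi_2\circ\pi_2$, yet the two fixed points of $\iota\sigma$ are sent by $\pi_1$ to a single ramification point of $\psi_1$ and by $\pi_2$ to two distinct non-ramification points of $\psi_2$, which is incompatible with $\psi_i$ having degree two. You flag the status of $Z$ as something "to confirm" at the end of your plan, but without an argument of this kind the claim that $\Phi$ lands in $\mell$ — and hence the theorem itself — is not established.
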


\begin{proof}
For any $\calC\in W_{\geq C_2^2}$ and $\sigma$ a non-hyperelliptic involution, there are ten points that are fixed by some non-trivial element of the group $\langle\iota,\sigma\rangle$:
\begin{itemize}
    \item There are six points fixed by $\iota$, namely the Weierstrass points of the curve.  These points are not fixed by $\sigma$ or by $\iota\sigma$, but the action of $\sigma$ permutes them: applying $\sigma$ to a Weierstrass point yields a different Weierstrass point. Therefore, the six Weierstrass points decompose into three orbits under the action of $\sigma$, each consisting of two elements.
    \item There are two points fixed by $\sigma$, which are not fixed by $\iota$ or $\iota\sigma$.
    \item There are two points fixed by $\iota\sigma$, which are not fixed by $\iota$ or $\sigma$.
\end{itemize}

Recall that given a genus two curve, we can construct morphisms $\pi_1:\calC\rightarrow E_1$ and $\pi_2:\calC\rightarrow E_2$, where $E_1$ is the elliptic curve $\calC/\langle\sigma\rangle$
with the choice of the point at infinity given by the image by $\pi_1$ of the fixed points of $\iota\sigma$, and $E_2$ is the elliptic curve $\calC/\langle\iota\sigma\rangle$
with the choice of the point at infinity given by the image by $\pi_2$ of the fixed points of $\sigma$.\\

\underline{\textbf{The morphism $\Phi:\calM\rightarrow\mell$.}}\\

Let $(\calC,\sigma)\in\calM$. As we just mentioned, the action of $\iota$ on $\calC$ fixes the six Weierstrass points, and they form three orbits under the action of $\sigma$. Let us fix an ordering of these three orbits: 
\begin{align*}
\{\{P_{11},P_{12}\},\{P_{21},P_{22}\},\{P_{31},P_{32}\}\},
\end{align*}
where the $P_{ij}$ are the Weierstrass points.\\

Now, let $E_1=\calC/\langle\sigma\rangle$ and $E_2=\calC/\langle\iota\sigma\rangle$ as before. The action of $\iota$ on $\calC$ induces an action of order two on $E_i$, which  is the one that sends each point to its inverse with respect to the group law. The fixed points are the point of infinity and
\begin{align*}
Q_{i,1}&\coloneqq\pi_i(P_{11})=\pi_i(P_{12}), & Q_{i,2}&\coloneqq\pi_i(P_{21})=\pi_i(P_{22}), &
Q_{i,3}&\coloneqq\pi_i(P_{31})=\pi_i(P_{32}).
\end{align*}
Therefore, these points correspond to the non-trivial $2$-torsion points of $E_i$. For $i\in\{1,2\}$, we will then define $\phi_i$ to be the isomorphism:
\begin{align*}
\phi_i\colon\quad(\mathbb{Z}/2\mathbb{Z})^2&\longrightarrow E_i[2](k)\\
(1,0)&\longmapsto Q_{i,1}\\
(0,1)&\longmapsto Q_{i,2}\\
(1,1)&\longmapsto Q_{i,3}.\\
\end{align*}
Suppose that we picked another ordering of the orbits of the Weierstrass points. Let $\tilde{Q}_{i,j}$ the points on $E_i$ associated to this order and $\tilde{\phi_i}$ the corresponding level 2 structure. Then, there exists a $\tau\in S_3$ such that $Q_{i,\tau(j)}=\tilde{Q}_{i,j}$ and, as such, a $\tau\in \Aut((\mathbb{Z}/2\mathbb{Z})^2)$, such that $\tilde{\phi}_i=\phi_i\circ\tau$.\\

Furthermore, if we have two isomorphic pairs $(\calC,\sigma)$ and $(\calC',\sigma')$, there is an isomorphism $\alpha:\calC\rightarrow\calC'$ which induces isomorphisms $\calC/\langle\sigma\rangle\cong\calC'/\langle\alpha\sigma\alpha^{-1}\rangle=\calC'/\langle\sigma'\rangle$ and $\calC/\langle\iota\sigma\rangle\cong\calC'/\langle\alpha\iota\sigma\alpha^{-1}\rangle=\calC'/\langle\iota'\sigma'\rangle$. As $\alpha$ sends the fixed points of $\sigma, \iota\sigma$ and $\iota$ to the fixed points of $\sigma', \iota'\sigma'$ and $\iota'$ respectively, the associated elliptic curves satisfy that $(E_i,\phi_i)\cong(E'_i,\phi_i')$, for $i\in\{1,2\}$.\\

Therefore, the map
\begin{align*}
\Phi\colon\quad\calM&\longrightarrow\mellc\cong (X(2)\times X(2))/\Aut((\mathbb{Z}/2\mathbb{Z})^2)\\
(\calC,\sigma)&\longmapsto((E_1,\phi_1),(E_2,\phi_2))
\end{align*}
is well-defined. The image of $\Phi$ lies in $\mell$ for the following reason:\\

Given an elliptic curve $E_i$, we can always construct a degree two map that ramifies at the four $2$-torsion points. As we can map uniquely any two sets of three points in $\mathbb{P}^1$ using a projective transformation, we deduce that for each $E_i$ there exists a unique map $\psi_i: E_i\rightarrow\mathbb{P}^1$ satisfying
\begin{align*}
\psi_i(\phi_i((1,0)))&=0, & \psi_i(\phi_i((0,1)))&=1, &  \psi_i(\phi_i((1,1)))&=\infty.
\end{align*}
Composing $\psi_i$ with the quotient map $\pi_i:\calC\rightarrow E_i$ gives us maps $\psi_i\circ\pi_i:\calC\rightarrow\mathbb{P}^1$ which satisfy that $\psi_1\circ\pi_1=\psi_2\circ\pi_2$.\\

If $(E_1,\phi_1)\cong(E_2,\phi_2)$, this would imply that there is a $\lambda\in\mathbb{A}^1\setminus\{0,1\}$ and two isomorphisms $\alpha_i:E_i\rightarrow E_\lambda$ such that $\phi_\lambda=\alpha_i\circ\phi_i$. 
One can check that if we define $\psi'$ to be the map $E_\lambda\rightarrow\mathbb{P}^1$ such that
\begin{align*}
\psi'(\phi_\lambda((1,0)))&=0, & \psi'(\phi_\lambda((0,1)))&=1, &  \psi'(\phi_\lambda((1,1)))&=\infty,
\end{align*}
then $\psi'=\psi_1\circ\alpha_1^{-1}=\psi_2\circ\alpha_2^{-1}$, and so,
\begin{align*}
\psi'\circ\alpha_1\circ\pi_1=\psi_1\circ\alpha_1^{-1}\circ\alpha_1\circ\pi_1=\psi_1\circ\pi_1=\psi_2\circ\pi_2=\psi_2\circ\alpha_2^{-1}\circ\alpha_2\circ\pi_2=\psi'\circ\alpha_2\circ\pi_2.\\
\end{align*}
Let $Q$ and $\iota(Q)$ be the two fixed points of $\iota\sigma$ in $\calC$. Then, $\alpha_1\circ\pi_1$ would send both points to the point at infinity of $E_\lambda$, which is a ramification point of $\psi'$. From the description of the fixed points that we gave at the start of the proof, one can check that $\alpha_2\circ\pi_2$ would send $Q$ and $\iota(Q)$ to two different points of $E_\lambda$, none of which is a ramification point of $\psi'$. As $\psi'$ has degree two, we deduce that 
\begin{align*}
\psi'\circ\alpha_1\circ\pi_1(Q)&\neq\psi'\circ\alpha_2\circ\pi_2(Q) & \psi'\circ\alpha_1\circ\pi_1(\iota(Q))&\neq\psi'\circ\alpha_2\circ\pi_2(\iota(Q)) 
\end{align*}
and this would lead to a contradiction, as we had shown that $\psi'\circ\alpha_1\circ\pi_1=\psi'\circ\alpha_2\circ\pi_2$.\\[10pt]

\underline{\textbf{The morphism $\Psi:\mell\rightarrow\calM$.}}\\

Let $((E_1,\phi_1),(E_2,\phi_2))\in X(2)\times X(2)$ such that $(E_1,\phi_1)\not\cong(E_2,\phi_2)$. To construct the inverse of $\Phi$, consider the two morphisms $\psi_1$ and $\psi_2$ that we defined before, which, as we saw, depended on the $\phi_i$.  We define the \textbf{fibre product} of $(E_1,\phi_1)$ and $(E_2,\phi_2)$ along $\mathbb{P}^1$ to be the scheme $E_1\times_{\mathbb{P}^1}E_2$, together with morphisms $p_1$ and $p_2$ to $E_1$ and $E_2$ respectively that satisfies the following universal property: for any scheme $W$
with morphisms $\varphi_1$ and $\varphi_2$, such that $\psi_1\circ\varphi_1=\psi_2\circ\varphi_2$, there exists a unique morphism $\theta:W\rightarrow E_1\times_{\mathbb{P}^1}E_2$ such that $\varphi_1=p_1\circ\theta$ and $\varphi_2=p_2\circ\theta$.

\[\begin{tikzcd}
W \arrow[rd, "\exists\theta" description] \arrow[rrd, "\varphi_1"] \arrow[rdd, "\varphi_2"'] &                                                                &                          \\
                                                                                      & E_1\times_{\mathbb{P}^1}E_2 \arrow[r, "p_1"'] \arrow[d, "p_2"] & E_1 \arrow[d, "\psi_1"'] \\
                                                                                      & E_2 \arrow[r, "\psi_2"]                                        & \mathbb{P}^1            
\end{tikzcd}\vspace{10pt}\]

We will prove that the normalisation $\nor$ of $E_1\times_{\mathbb{P}^1}E_2$ is a genus two curve. To do this, we will show that for any $E_1$ and $E_2$, there is a genus two curve $\calC_{\lambda_1,\lambda_2}$ and morphisms $\varphi_1$ and $\varphi_2$ making the diagram commutative and use the information that we know about the ramification of the morphisms to show that this curve is isomorphic to $\nor$.\\

Let $\mu_i\in\mathbb{P}^1$ be the image under $\psi_i$ of the point at infinity of $E_i$, and let $\lambda_i=\tfrac{1}{1-\mu_i}$. As we saw, we can construct an isomorphism $\alpha_i:E_i\rightarrow E_{\lambda_i}$ where
\begin{align*}
 E_{\lambda_i}\colon\quad y^2=x(x-1)(x-\lambda_i)
\end{align*}
such that $(E_i,\phi_i)\cong(E_{\lambda_i},\phi_{\lambda_i})$ and $\psi_i=\psi_i'\circ\alpha_i$ is defined by
\begin{align*}
\psi_i'\colon\quad E_{\lambda_i}&\longrightarrow\mathbb{P}^1\\
(x,y)&\longmapsto\frac{(1-\lambda_i)x}{x-\lambda_i}\\
\end{align*}
Recall that $(E_1,\phi_1)\not\cong(E_2,\phi_2)$ implies that $\lambda_1\neq\lambda_2$. Now, consider the curve
\begin{align*}
\calC_{\lambda_1,\lambda_2}\colon\quad y^2=(x^2-1)\left(x^2-\frac{\lambda_1}{\lambda_2}\right)\left(x^2-\frac{\lambda_1(\lambda_2-1)}{\lambda_2(\lambda_1-1)}\right).\\
\end{align*}
The curve $\calC_{\lambda_1,\lambda_2}$ has genus two whenever its discriminant is non-zero. This occurs precisely for any two values $\lambda_1,\lambda_2\in \mathbb{A}^1\setminus\{0,1\}$, such that $\lambda_1\neq\lambda_2$. We can construct two maps, $\rho_1$ and $\rho_2$
\begin{align*}
\rho_1\colon\quad \calC_{\lambda_1,\lambda_2} &\longrightarrow E_{\lambda_1} \\
(x,y) &\longmapsto\left(\nu_1^2\left(x^2-\frac{\lambda_1(\lambda_2-1)}{\lambda_2(\lambda_1-1)}\right),\nu_1^3 y\right),\\
\rho_2\colon\quad \calC_{\lambda_1,\lambda_2} &\longrightarrow E_{\lambda_2} \\
(x,y) &\longmapsto\left(-\nu_2^2\left(\frac{1}{x^2}-\frac{\lambda_2(\lambda_1-1)}{\lambda_1(\lambda_2-1)}\right),\nu_2^3\left( \frac{\lambda_2(\lambda_1-1)^{\frac{1}{2}}y}{\lambda_1(\lambda_2-1)^{\frac{1}{2}}x^3}\right)\right),
\end{align*}

where
\begin{align*}
\nu_1&=\left(\frac{\lambda_2(\lambda_1-1)}{\lambda_1-\lambda_2}\right)^\frac{1}{2}, & \nu_2&=\left(\frac{\lambda_1(\lambda_2-1)}{\lambda_1-\lambda_2}\right)^\frac{1}{2}.\\
\end{align*}
Then, $\psi_1'\circ\rho_1=\psi_2'\circ\rho_2$, as in both cases, this is the map
\begin{align*}
\psi_i'\circ\rho_i\colon\quad \calC_{\lambda_1,\lambda_2}&\longrightarrow \mathbb{P}^1\\
(x,y)&\longmapsto \frac{\lambda_2(1-\lambda_1)x^2-\lambda_1(1-\lambda_2)}{\lambda_2x^2-\lambda_1}.\\
\end{align*}
As a consequence, we can define  $\varphi_1=\alpha_1^{-1}\circ\rho_1$ and $\varphi_2=\alpha_2^{-1}\circ\rho_2$, and deduce that there exists a map $\theta:\calC_{\lambda_1,\lambda_2}\rightarrow E_1\times_{\mathbb{P}^1}E_2$ making the following diagram commutative:
\[
\begin{tikzcd}
\calC_{\lambda_1,\lambda_2} \arrow[rd, "\theta" description] \arrow[rrd, "\varphi_1"] \arrow[rdd, "\varphi_2"'] \arrow[rr, "\rho_1"] \arrow[dd, "\rho_2"'] &                                                                & E_{\lambda_1} \arrow[dd, "\psi_1'" description, bend left] \\
                                                                                                                                        & E_1\times_{\mathbb{P}^1}E_2 \arrow[r, "p_1"'] \arrow[d, "p_2"] & E_1 \arrow[d, "\psi_1"'] \arrow[u, "\alpha_1"]               \\
E_{\lambda_2} \arrow[rr, "\psi_2'" description, bend right]                                                                             & E_2 \arrow[r, "\psi_2"] \arrow[l, "\alpha_2"']                   & \mathbb{P}^1                                              
\end{tikzcd}\vspace{10pt}
\]
It is not difficult to see that the preimages of the three points that are simultaneously in the branch loci of $\psi_1$ and $\psi_2$ give rise to singularities of $E_1\times_{\mathbb{P}^1}E_2$, so this is not a smooth curve. As $p_1$ is a dominant morphism to $E_1$ of degree two, $E_1\times_{\mathbb{P}^1}E_2$ has at most two irreducible components.\\

If it had two irreducible components, the ramification locus of $p_1$ should correspond to the points where the two components meet and, therefore, all of these points should be singular. However, we can check that $p_1$ has two ramification points which are smooth, namely, the two preimages of the ramification point of $\psi_1$ that is not a ramification point of $\psi_2$ (earlier in the proof we denoted this point as $\mu_1$). Therefore, we deduce that $E_1\times_{\mathbb{P}^1}E_2$ is irreducible. Although we will not prove it, one can further check that $E_1\times_{\mathbb{P}^1}E_2$ has three nodal singularities, and its arithmetic genus is five.\\

As both maps $\varphi_1$ and $p_1$ have degree two, we deduce that the map $\theta$ is generically $1$-to-$1$, and as $\calC_{\lambda_1,\lambda_2}$ is smooth, we deduce that $\calC_{\lambda_1,\lambda_2}$ is isomorphic to the normalisation of $E_1\times_{\mathbb{P}^1}E_2$. There is a clear non-hyperelliptic involution
in $\calC_{\lambda_1,\lambda_2}$ sending $x\mapsto -x$, and this gives rise to an involution $\tilde{\sigma}$ in $\nor$.
Therefore, we have a morphism
\begin{align*}
(X(2)\times X(2))\setminus\Delta&\longrightarrow\calM\\
((E_1,\phi_1),(E_2,\phi_2))&\longmapsto(\nor,\tilde{\sigma})\\
\end{align*}
This extends to a morphism $\Psi:\mell\rightarrow\calM$.\\

To see this, assume that we have $((E'_1, \phi'_1), (E'_2, \phi'_2))$ such that $E_1 \cong E'_1$, $E_2 \cong E'_2$, and there is an automorphism $\tau \in \Aut((\mathbb{Z}/2\mathbb{Z})^2)$ with $\tau(\phi_i) = \phi'_i$ for $i = 1, 2$.  
Then, from the description of $X(2)$ that we gave earlier, we see that $\mathcal{C}_{E'_1 \times_{\mathbb{P}^1} E'_2} \cong \mathcal{C}_{f_\tau(\lambda_1), f_\tau(\lambda_2)}$.  
By computing the Igusa invariants with Magma, we can check that 
\[
J_{2n}(\mathcal{C}_{\lambda_1, \lambda_2}) = J_{2n}(\mathcal{C}_{f_\tau(\lambda_1), f_\tau(\lambda_2)})
\]
for all $1 \leq n \leq 5$ and all $\tau \in S_3$. Therefore, $\mathcal{C}_{\lambda_1, \lambda_2} \cong \mathcal{C}_{f_\tau(\lambda_1), f_\tau(\lambda_2)}$.\\

Hence, $\Psi: \mathcal{M}_{\mathrm{ell}} \to \mathcal{M}$ is a well-defined morphism.

\begin{remark}
While it is always true that $\mathcal{C}_{\lambda_1, \lambda_2} \cong \mathcal{C}_{f_\tau(\lambda_1), f_\tau(\lambda_2)}$, it is certainly not true in general that $\mathcal{C}_{\lambda_1, \lambda_2} \cong \mathcal{C}_{\lambda_1, f_\tau(\lambda_2)}$.  
In fact, one can check that $\mathcal{C} \in W_{\geq D_4}$ if and only if there exists $\lambda \in \mathbb{A}^1 \setminus \{0,1\}$ and an involution $\tau \in S_3$ such that $\mathcal{C} \cong \mathcal{C}_{\lambda, f_\tau(\lambda)}$.  
Likewise, $\mathcal{C} \in Z$, where $Z$ is as defined in subsection~\ref{par_subsection}, if and only if there exists $\lambda$ and an element $\tau \in S_3$ of order three such that $\mathcal{C} \cong \mathcal{C}_{\lambda, f_\tau(\lambda)}$.\\
\end{remark}
\newpage

\underline{$\Phi=\Psi^{-1}$}\\

Finally, to prove that we have an isomorphism $\Phi:\calM\rightarrow\mell$, we need to check that $\Psi\circ\Phi=\mathrm{id}_{\calM}$. This is easy to see from the following commutative diagram:
\[\begin{tikzcd}
\calC \arrow[rrd, "\pi_1"] \arrow[rdd, "\pi_2"'] &                                                                                                                   &                                                 \\
                                                 & \quad\calC/\langle\sigma\rangle\!\times_{\mathbb{P}^1}\!\calC/\langle\iota\sigma\rangle \arrow[r, "p_1"'] \arrow[d, "p_2"] & \calC/\langle\sigma\rangle \arrow[d, "\psi_1"'] \\
                                                 &\calC/\langle\iota\sigma\rangle \arrow[r, "\psi_2"]                                                               & \mathbb{P}^1                                   
\end{tikzcd}\]

The universal property of the fibre product implies that we can construct a map $\theta:\calC\rightarrow\calC/\langle\sigma\rangle\times_{\mathbb{P}^1}\calC/\langle\iota\sigma\rangle$. This map $\theta$ induces an isomorphism between $\calC$ and the normalisation of $\calC/\langle\sigma\rangle\times_{\mathbb{P}^1}\calC/\langle\iota\sigma\rangle$. By carefully tracking the involution $\sigma$ through the commutative diagrams, one can check that $\Psi\circ\Phi=\mathrm{id}_{\calM}$.
\end{proof}

For every $\calC\in W_{\geq C_2^2}$ and every non-hyperelliptic involution $\sigma$, we have that both $(\calC,\sigma)$ and $(\calC,\iota\sigma)$ are in $\calM$, so there is an action of $C_2$ on $\calM$ exchanging these two points of the moduli space. Let $\mquot$ denote the quotient. Similarly, we have an action of $C_2$ on $\mell$ that swaps the two copies of $X(2)$. Let $\mells$ be the quotient of $\mell$ by that action. We can easily check that 
\begin{align*}
\Phi((C,\sigma))&=((E_1,\phi_1),(E_2,\phi_2))& &\Longleftrightarrow& \Phi((C,\iota\sigma))&=((E_2,\phi_2),(E_1,\phi_1))
\end{align*}
and therefore $C_2$ acts on $\calM$ and $\mell$ in a compatible way. As a consequence, we deduce the following:
\begin{corollary}
As coarse moduli spaces, $\mquot$ is isomorphic to $\mells$.
\end{corollary}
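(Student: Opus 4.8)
The plan is to deduce the corollary directly from Theorem \ref{iso_theorem}: the isomorphism $\Phi\colon\calM\to\mell$ is $C_2$-equivariant for the two actions described above, and a $C_2$-equivariant isomorphism descends to an isomorphism of quotients. So the work is entirely in checking equivariance and then invoking the formal descent, after making sure the quotients make sense.

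First I would record the two actions precisely. On $\calM$ the generator of $C_2$ sends $(\calC,\sigma)$ to $(\calC,\iota\sigma)$, which is legitimate because $\iota\sigma$ is again a non-hyperelliptic involution and the rule is compatible with isomorphisms of pairs. On $\mell$ the generator swaps the two $X(2)$-factors; this passes to $\mellc=(X(2)\times X(2))/\Aut((\Z/2\Z)^2)$ since the swap commutes with the simultaneous $S_3$-action, and it preserves the diagonal $\Delta$, so it restricts to $\mell=\mellc\setminus\Delta$. Both $\calM$ and $\mell$ are quasi-projective, so the quotients $\mquot$ and $\mells$ exist as varieties (equivalently coarse moduli spaces) with the usual universal property; an equivariant isomorphism then descends uniquely.

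Next I would verify $\Phi((\calC,\iota\sigma))=\mathrm{swap}(\Phi((\calC,\sigma)))$, which is the assertion stated just before the corollary. Write $\Phi((\calC,\sigma))=((E_1,\phi_1),(E_2,\phi_2))$, where, as in the proof of Theorem \ref{iso_theorem}, $E_1=\calC/\langle\sigma\rangle$ with origin $\pi_1(\mathrm{Fix}(\iota\sigma))$, $E_2=\calC/\langle\iota\sigma\rangle$ with origin $\pi_2(\mathrm{Fix}(\sigma))$, and $\phi_i$ is built from the three $\sigma$-orbits of the six Weierstrass points. Replacing $\sigma$ by $\iota\sigma$ interchanges the two subgroups and the two associated quotient maps: using $\iota^2=1$, the "first" quotient of the new pair is $\calC/\langle\iota\sigma\rangle$ with origin the image of $\mathrm{Fix}(\iota\cdot\iota\sigma)=\mathrm{Fix}(\sigma)$, i.e. exactly $(E_2,\text{origin})$, and the "second" is $\calC/\langle\sigma\rangle$ with origin the image of $\mathrm{Fix}(\iota\sigma)$, i.e. exactly $(E_1,\text{origin})$. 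Since $\iota$ fixes every Weierstrass point, $\iota\sigma$ and $\sigma$ induce the same partition of the Weierstrass points into three orbits, so taking the same ordering of the orbits produces the level-$2$ structures $\phi_2$ on $E_2$ and $\phi_1$ on $E_1$; a different ordering changes the answer by a simultaneous $S_3=\Aut((\Z/2\Z)^2)$-twist, hence gives the same point of $\mell$. Therefore $\Phi((\calC,\iota\sigma))=((E_2,\phi_2),(E_1,\phi_1))$, so $\Phi$ intertwines the swap on $\mell$ with the $(\calC,\sigma)\leftrightarrow(\calC,\iota\sigma)$ involution on $\calM$. Passing to quotients yields the isomorphism $\mquot\cong\mells$.

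The only genuinely delicate point is the equivariance step, specifically the bookkeeping that accompanies the substitution $\sigma\mapsto\iota\sigma$: one must keep careful track of which quotient becomes "first" versus "second", of the two choices of origin (images of the fixed loci of the two non-hyperelliptic involutions), and of the ordering of the Weierstrass-point orbits used to define the $\phi_i$. Once this is pinned down, the descent to quotients is purely formal.
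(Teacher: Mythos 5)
Your proposal is correct and follows essentially the same route as the paper: the paper's proof consists precisely of the observation that $\Phi((\calC,\sigma))=((E_1,\phi_1),(E_2,\phi_2))$ implies $\Phi((\calC,\iota\sigma))=((E_2,\phi_2),(E_1,\phi_1))$, so that $\Phi$ is equivariant for the two $C_2$-actions and descends to the quotients. Your write-up simply fills in the equivariance bookkeeping (swapping $\mathrm{Fix}(\sigma)$ and $\mathrm{Fix}(\iota\sigma)$, and noting that $\iota\sigma$ and $\sigma$ induce the same orbits on the Weierstrass points) that the paper leaves as "we can easily check".
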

We can define a forgetful morphism $\pi:\mquot\rightarrow W_{\geq C_2^2}$ that sends the equivalence class of a pair $(\calC,\sigma)$ to $\calC$. By checking the conjugacy class of every element of order two in every possible automorphism group of $\calC$, we can study the fibres of this morphism $\pi$:
\begin{enumerate}
    \item If $\calC\in W_{\geq C_2^2}\setminus W_{\geq D_4} $, we can check that there are only two classes of pairs of points in $\calM$ whose first factor is $\calC$, corresponding to $(\calC,\sigma)$ and $(\calC,\iota\sigma)$ for some choice of $\sigma$. These choices are identified when we take the quotient, and therefore $\pi^{-1}(\calC)$ is only one point.
    \item If $\calC\in W_{\geq D_4}\setminus W_{\geq \GL_2(\mathbb{F}_3)}$, there are again two classes of pairs of points in $\calM$ whose first factor is $\calC$. If we let $\sigma$ be a non-hyperelliptic involution and $\tau_4$ an automorphism of order four of $\calC$, the two pairs are $(\calC,\sigma)$ and $(\calC,\tau_4\,\sigma)$. In both cases, $(\calC,\sigma)\cong(\calC,\iota\sigma)$ and $(\calC,\tau_4\,\sigma)\cong(\calC,\iota\tau_4\,\sigma)$, so they are different points in $\mquot$. Therefore, $\pi^{-1}(\calC)$ are two points.
    \item If $\Aut(\calC)=\GL_2(\mathbb{F}_3)$, up to conjugation, $\calC$ has a unique non-hyperelliptic involution, and therefore $\pi^{-1}(\calC)$ is only one point. 
    
\end{enumerate}
From this analysis, we deduce that the forgetful morphism $\pi: \mquot\rightarrow W_{\geq C_2^2}$ is injective outside the set of points where the image of this map lies inside $W_{\geq D_4}\setminus W_{\geq \GL_2(\mathbb{F}_3)}$. We will use this result in Section \ref{intersections_sec}.\\

\subsection{The stratum of curves with automorphism group \texorpdfstring{$D_4$}{D4}}
Assume that the characteristic of the base field $k$ is not two. Without any loss of generality, suppose that a general genus two curve admits the automorphism of order four
\begin{align*}
\tau_4\colon\quad\begin{cases}
    x\mapsto -x,\\
    y\mapsto iy.
    \end{cases}
\end{align*}

Then, we can always write such a curve in the form
\begin{align*}
y^2=f_5x^5+f_3x^3+f_1x 
\end{align*}
for some value of $\{f_1,f_3,f_5\}$.\\

Using the same strategy as in Proposition \ref{strat_prop}, one can prove the following:

\begin{proposition} \label{D4_prop}
If the characteristic of $k$ is not five, $W_{\geq D_4}$ is given by $\mathbb{V}(h_5,h_6)$ where
\begin{align*}
h_5&=J_2^5 - 64J_2^3J_4 + 1216J_2J_4^2 - 5760J_4J_6 + 768J_2J_8 - 128000J_{10}\\
h_6&=J_2^4J_4 - 74J_2^2J_4^2 + 1456J_4^3 - 4320J_6^2 - 40J_2^2J_8 + 1728J_4J_8 - 3200J_2J_{10}.
\end{align*}
If the characteristic of $k$ is five, then $W_{\geq D_4}$ is given by $\mathbb{V}(h_4,h_8,h_9)$ with
\begin{align*}
    h_4&=J_2^4 + J_2^2J_4 + J_4^2 + 3J_8,\\
    h_8&=2J_2^2J_4^3 + 2J_4^4 + 3J_4J_6^2 + 3J_2^2J_4J_8 + J_4^2J_8 + 2J_8^2 + J_2^3J_{10},\\
    h_9&=J_6^3 + 2J_2J_4^2J_8 + 3J_2J_8^2 + J_2^2J_4J_{10}.\\
\end{align*}
\end{proposition}

The curve $W_{\geq D_4}$ always has a singular point with coordinates $[120: 330: -320:-36825: 11664]$. From this, we can see that it is in $\mathfrak{X}\setminus \mathfrak{X}_0$ if the characteristic is three, it corresponds to the curve with automorphism group $\SL_2(\mathbb{F}_5)$ if the characteristic is five, and, otherwise, it corresponds to the curve with automorphism group $C_3\rtimes D_4$.\\

\subsubsection{Parametrisation of the stratum}
Regardless of the characteristic of the base field, the stratum $W_{\geq D_4}$ is a rational curve, as we can find a birational morphism $\mathbb{P}^1\rightarrow W_{\geq D_4}$ whose inverse is given by the map
\begin{align*}
W_{\geq D_4} &\longrightarrow \mathbb{P}^1\\
[J_2:J_4:J_6:J_8:J_{10}]&\longmapsto[J_2(11J_2^2 - 480J_4): 5J_2^3 - 224J_2J_4 - 720J_6]
\end{align*}
which is defined outside of the singular point of $W_{\geq D_4}$.\\

From the previous map that we described, we see that if $P=[J_2:J_4:J_6:J_8:J_{10}]$ is a smooth point of $W_{\geq D_4}$, a model for the curve corresponding to this point is
\begin{align*}
y^2=x^5+x^3+\left(\frac{7 J_2^3 - 288 J_2 J_4 + 2160 J_6}{4 (3 J_2^3 - 160 J_2 J_4 - 3600 J_6)}\right)x.\\
\end{align*}
This expression generalises the one found by Cardona and Quer, in the sense that it is valid in every characteristic that is not two \cite[Proposition 2.1]{Cardona2007CurvesD12}.\\

\subsection{The stratum of curves with automorphism group \texorpdfstring{$D_6$}{D6}} \label{D6_subsection}
Suppose a general genus two curve admits the automorphism of order three
\begin{align*}
\tau_3\colon\quad\begin{cases}
    x\mapsto \frac{1}{1-x},\\
    y\mapsto \frac{y}{(x-1)^3}.
    \end{cases}\\
\end{align*}
Then, we can always write such a curve in the form $y^2+g(x)y=f(x)$ with
\begin{align*} 
g(x)&=g_0 x^3-(g_1+3g_0)x^2+g_1 x+g_0,\\
f(x)&=f_0 x^6-(f_1+6 f_0 )x^5+(f_2+5f_1+15f_0)x^4-(2f_2+5f_1+10f_0)x^3+f_2 x^2+f_1x +f_0,
\end{align*}
for some choice of values of $\{g_0,g_1,f_0,f_1,f_2\}$.\\

\begin{proposition} \label{D6_prop}
If the characteristic is three, $W_{\geq D_6}$ is given by $\mathbb{V}(h_{2},h_{5})\subset\mathfrak{X}_0$ where
\begin{align*}
h_2&=J_4,\\
h_5&=J_2J_8-J_{10},
\end{align*}
and if the characteristic is five, $W_{\geq D_6}$ is given by $\mathbb{V}(h_{4},h_{8}, h_9)\subset\mathfrak{X}_0$ where
\begin{align*}
h_4&= J_2J_6 + 4J_8,\\
h_8&=3J_4J_6^2 + 4J_2^2J_4J_8 + J_8^2 + J_2^3J_{10},\\
h_9&=J_6^3 + J_4J_6J_8 + 3J_2J_8^2 + J_2^2J_4J_{10}.
\end{align*}
Otherwise, $W_{\geq D_6}$ is given by $\mathbb{V}(h_{5},h_{6})\subset\mathfrak{X}_0$ where
\begin{align*}
h_5&=2J_2^2J_6 - 45J_4J_6 - 2J_2J_8 - 375J_{10},\\
h_6&=-175J_4^3 + 9J_2^3J_6 - 675J_6^2 - 9J_2^2J_8 - 540J_4J_8.\\
\end{align*}
\end{proposition}
This curve has a singular point with coordinates $[20: 30: -20: -325: 64]$ in every characteristic except three. This point is in $\mathfrak{X}\setminus\mathfrak{X}_0$ if the characteristic is two, corresponds to the curve with automorphism group $\SL_2(\mathbb{F}_5)$ if the characteristic is five, and, otherwise, corresponds to the curve with automorphism group $\GL_2(\mathbb{F}_3)$.\\

\subsubsection{Parametrisation of the stratum}
As before, regardless of the characteristic of the base field, the stratum $W_{\geq D_6}$ is a rational curve, as we can find a birational morphism $\mathbb{P}^1\rightarrow W_{\geq D_6}$.\\

The inverse is given by 
\begin{align*}
W_{\geq D_6} &\longrightarrow \mathbb{P}^1\\
[J_2:J_4:J_6:J_8:J_{10}]&\longmapsto[J_2^3: J_6]
\end{align*}
in characteristic three, and 
\begin{align*}
W_{\geq D_6} &\longrightarrow \mathbb{P}^1\\
[J_2:J_4:J_6:J_8:J_{10}]&\longmapsto[J_2(3J_2^2 - 40J_4): -J_2J_4 - 30J_6]
\end{align*}
otherwise. This map is except at the singular point of $W_{\geq D_6}$.\\

Similarly to the $W_{\geq D_4}$ stratum, all the points $W_{\geq D_6}$ over $k$ correspond to genus two curves defined over $k$. If the characteristic is not three, we can see that if $P$ is a smooth point of $W_{\geq D_6}$ with coordinates $[J_2:J_4:J_6:J_8:J_{10}]$, a model for the curve corresponding to this point is
\begin{align*}
y^2+(x^3+1)y=-x^6-\frac{3 J_2^3 - 133 J_2 J_4 - 2790 J_6}{3 (3 J_2^3 - 160 J_2 J_4 - 3600 J_6)}.\\
\end{align*}
This expression generalises the one found by Cardona and Quer, as it is valid in every characteristic that is not three \cite[Proposition 2.1]{Cardona2007CurvesD12}.\\

Extending our base field to include a third root of unity $\omega$, we can find a model for this curve as in the start of Subsection \ref{D6_subsection} where the values for $\{f_0,f_1,f_2,g_0,g_1\}$ are given by
\begin{align*}
f_0&=\frac{12 J_2^3 - 613 J_2 J_4 - 13590 J_6}{81(3 J_2^3 - 160 J_2 J_4 - 3600 J_6)},\\
f_1&=\frac{2 \omega ((3 + 9 \omega)J_2^3-(133 + 480 \omega)J_2 J_4-(2790+10800\omega)J_6)}{27(3 J_2^3 - 160 J_2 J_4 - 3600 J_6)},\\
f_2&=\frac{5\omega(J_2^3-(480+133\omega)J_2J_4+(10800+2790\omega)J_6)}{27(3 J_2^3 - 160 J_2 J_4 - 3600 J_6)},\\
g_0&=0,\\
g_1&=1.\\
\end{align*}
This model of the curve does not reduce well modulo three, but the Igusa invariants define a point in $M_2$ that has good reduction at $p=3$. The reduction of that point modulo three corresponds to the following genus two curve:
\begin{align*}
y^2+(-x^2+x)y=-\frac{J_6^\frac{1}{3}}{J_2}(x+1)^6.\\
\end{align*}

\subsection{The stratum of curves with automorphism group \texorpdfstring{$C_2^5$}{C2⁵}} \label{C2^5_subsection}
Suppose that the automorphism group of a genus two curve in characteristic two contains $C_2^5$. Then, by the work of Igusa \cite{Igusa1960ArithmeticTwo}, we can see that this curve must be of the form
\begin{align*}
y^2+g_0y=f_5x^5+f_3 x^3
\end{align*}
for some value of $\{g_0,f_3,f_5\}$.\\

The stratum $W_{\geq C_2^5}$ is given by $\mathbb{V}(J_2,J_4,J_6)\subset\mathfrak{X}_0$, so it is parametrised to be the points of the form $[0:0:0:J_8:J_{10}]\in M_2$. The curve corresponding to the point $[0:0:0:J_8:J_{10}]$ corresponds to setting the values of $\{g_0,f_3,f_5\}$ to
\begin{align*}
g_0&=J_{10}^{\tfrac{1}{8}}, & f_3&=J_{8}^{\tfrac{1}{8}}, &f_5&=1,
\end{align*}
in the previous model.\\

\section{The Ekedahl-Oort stratification} \label{EO_section}

Let $\Jacc$ be the Jacobian variety of a genus two curve $\calC$ defined over an algebraically closed field $k$ of characteristic $p$. The \textbf{$\boldsymbol{p}$-rank} of $\Jacc$ is the integer $f$ such that $\lvert \Jacc[p](k)\rvert=p^f$. As $\dim(\Jacc)=2$, $0\leq f\leq 2$.\\

Let $\mu_p$ denote the finite group scheme representing $R\rightarrow \Spec(R[x]/(x^p-1))$ for any ring $R$. We can also define the $p$-rank in terms of the group scheme $\mu_p$ as
\begin{align*}
f(\Jacc)=\dim_{\mathbb{F}_p}(\Hom(\mu_p,\Jacc)).\\
\end{align*}

Let $\alpha_p$ denote the finite group scheme representing $R\rightarrow \Spec(R[x]/(x^p))$ for any ring $R$ of characteristic $p$. Then, the \textbf{$\boldsymbol{a}$-number} of $\Jacc$ is defined to be
\begin{align*}
a(\Jacc)=\dim_k(\Hom(\alpha_p,\Jacc)).\\
\end{align*}
Note that $0\leq a\leq 2$ and $0\leq a+f\leq2$.\\

In positive characteristic, it is well-known that the moduli space of principally polarised varieties can always be stratified by the $p$-rank of the abelian variety. This stratification can be refined in many ways, such as intersecting these strata with the ones corresponding to abelian varieties with a fixed $a$-number, or by considering the stratification in terms of the Newton polygon of the variety.\\

However, the information provided by all those strata is determined by an even finer way of stratifying the moduli space in terms of an invariant known as the \textbf{Ekedahl-Oort type}.\\

The idea behind this invariant is that the $p$-torsion of an abelian variety is a symmetric truncated Barsotti-Tate group scheme of level one ($BT_1$), and the Ekedahl-Oort type classifies its isomorphism class as a $BT_1$ group scheme. This information is encoded in combinatorial information corresponding to the possible filtrations of the Dieudonné module associated to the $p$-torsion.\\

The theory behind these strata is incredibly rich, and gives rise to many fine strata for abelian varieties of high dimension \cites{Ekedahl2009CycleVarieties, Oort1974SubvarietiesSpaces}. However, in the case of abelian surfaces, we will not need to get into the details, as there are only four Ekedahl-Oort strata, and they all correspond to the intersection of strata with $p$-rank $f$ and $a$-number $a$, as shown by Pries \cite{Pries2008AP}:

\begin{table}[h]
\centering
\begin{tabular}{|c|c|c|c|}
\hline
\rowcolor[HTML]{a7c957} 
$\text{Type}$                    & $f$ & $a$ & $\dim$ \\ \hline
$(\Z/p\Z)^2\oplus\mu_p^2$                   & $2$ & $0$ & $3$                 \\ \hline
$\Z/p\Z\oplus\mu_p\oplus I_{1,1}$       & $1$ & $1$ & $2$                 \\ \hline
$I_{2,1}$               & $0$ & $1$ & $1$                 \\ \hline
$(I_{1,1})^2$ & $0$ & $2$ & $0$                 \\ \hline
\end{tabular}\vspace{10pt}

\caption{Types of Ekedahl-Oort strata for abelian surfaces.}
\end{table}

Here, $I_{r,1}$ is defined to be the unique symmetric $BT_1$ group scheme of rank $p^{2r}$ with $p$-rank 0 and $a$-number 1.\\

\subsection{The Hasse-Witt matrix of a genus two curve}
We will now see how to compute the $p$-rank and $a$-number of the Jacobian of a genus two curve $\calC$. Let $\sigma:k\rightarrow k$ be the Frobenius automorphism, let $\tau$ be its inverse, and let $H^0(\calC,\Omega^1_\calC)$ be the space of regular $1$-forms on $\calC$, which is a two-dimensional $k$-vector space that admits the basis $B=\{\frac{dx}{y},\frac{x dx}{y}\}$. The \textbf{Cartier operator} is the semi-linear map $\CC:H^0(\calC,\Omega^1_\calC)\rightarrow H^0(\calC,\Omega^1_\calC)$ satisfying the following properties:
\begin{enumerate}
    \item $\CC(\omega_1+\omega_2)=\CC(\omega_1)+\CC(\omega_2)$.
    \item $\CC(f^p\omega)=f\CC(\omega)$.
    \item $\CC(f^{n-1} df)=\begin{cases}
    df &\text{if }n=p,\\
    0 &\text{otherwise.}
    \end{cases}$\\
\end{enumerate}

We can check that the Cartier operator is a $\tau$-linear operator, whose associated matrix in the basis $B$ is what we know as the \textbf{Cartier-Manin} matrix $M_{CM}$. Here, we work under the convention that left multiplication by $M_{CM}$ calculates the effect of $\CC$ in the basis $B$ \cite{Achter2019HasseWittRequest}.\\

On the other hand, by Serre duality, the choice of $B$ fixes a basis $B'$ for the dual space $H^1(\calC,\calO_\calC)$. The action of Frobenius on $H^1(\calC,\calO_\calC)$ gives then rise to a $\sigma$-linear operator, whose associated matrix in the basis $B'$ is known as the \textbf{Hasse-Witt matrix} $M_{HW}$.
The way these two matrices are related is
\begin{align*}
M_{HW}&=(M_{CM}^\sigma)^\intercal, & M_{CM}&=(M_{HW}^\tau)^\intercal.\\
\end{align*}
In characteristic $p\geq3$, there is an easy way to compute these two matrices. If we complete the square to make $\calC$ to be of the form $y^2=f(x)$, for some $f(x)$ of degree either five or six, and if we define the $c_i$ to be the constants satisfying the relation
\begin{align*}
\sum_{i=0}c_i x^i=f(x)^{\frac{p-1}{2}},
\end{align*}
we can check that the Hasse-Witt matrix $M_{HW}$ is given by
\begin{align*}
M_{HW}=\begin{pmatrix}
c_{p-1} & c_{2p-1}\\
c_{p-2} & c_{2p-2}
\end{pmatrix}.\\
\end{align*}

The $p$-rank and $a$-number of the Jacobian of a genus two curve can be easily computed from the Hasse-Witt matrix by the following result:
\begin{proposition} \label{HW_prop} \cite[Lemma 1.1]{Ibukiyama1986SupersingularNumbers}
The Ekedahl-Oort type of $\Jacc$ is completely determined by its $p$-rank $f$ and its $a$-number $a$, and hence by its Hasse-Witt matrix:
\begin{itemize}
    \item $(f,a)=(2,0)$ if and only if $M_{HW}$ has rank two.
    \item $(f,a)=(1,1)$ if and only if $M_{HW}$ has rank one and $M_{HW}M_{HW}^\sigma\neq 0$.
    \item $(f,a)=(0,1)$ if and only if $M_{HW}\neq 0$ and $M_{HW}M_{HW}^\sigma= 0$.
    \item $(f,a)=(0,2)$ if and only if $M_{HW}= 0$.\\
\end{itemize}
\end{proposition}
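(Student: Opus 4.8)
The plan is to reduce the statement to the classification of symmetric $BT_1$ group schemes of rank $p^4$ recalled in the table above, and then to express each of the four conditions on $M_{HW}$ in terms of the Dieudonné module data that classifies this group scheme. First I would recall that for an abelian surface $A$ over a perfect field $k$ of characteristic $p$, the Dieudonné module $\mathbb{D}(A[p])$ is a $k$-vector space of dimension $4$ equipped with a $\sigma$-linear operator $F$ and a $\tau$-linear operator $V$ satisfying $FV=VF=0$ and $\im F=\ker V$, $\im V=\ker F$. Under the usual identifications, the action of $F$ on $\mathbb{D}(A[p])/V\mathbb{D}(A[p])\cong H^1(\calC,\calO_\calC)$ is exactly the $\sigma$-linear operator whose matrix in the basis $B'$ is $M_{HW}$; equivalently, via Serre duality, $V$ acts on $H^0(\calC,\Omega^1_\calC)$ as the Cartier operator $\CC$, with matrix $M_{CM}$. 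Thus $\rank M_{HW}=\rank M_{CM}$, and all four invariants we must read off are functions of the semilinear operator $F$ (or $\CC$) on a $2$-dimensional space.

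Next I would treat the $p$-rank. By definition $f=\dim_{\mathbb{F}_p}\Hom(\mu_p,\Jacc)$, and $\mu_p$ has Dieudonné module $k$ with $F$ an isomorphism and $V=0$; hence $f$ equals the dimension of the largest $F$-stable subspace of $\mathbb{D}$ on which $F$ is bijective, which (after passing to the $2$-dimensional quotient on which $M_{HW}$ acts, using that $V$ kills the complementary part) is precisely the "stable rank'' of the semilinear map $M_{HW}$, i.e. $\dim_{\Fbar}\bigcap_{n\geq 0}\im(M_{HW}M_{HW}^{\sigma}\cdots M_{HW}^{\sigma^{n-1}})$. On a $2$-dimensional space this stable rank is $2$ iff $M_{HW}$ is invertible, and is $1$ iff $\rank M_{HW}=1$ and the image line is not killed by the next twisted multiplication. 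Since $0\le f+a\le 2$ and the four types in the table are distinguished by the pair $(f,a)$, it suffices to pin down the two cases $\rank M_{HW}=1$ and $\rank M_{HW}=0$ on the nose. For $\rank M_{HW}=0$, i.e. $M_{HW}=0$, the operator $F$ vanishes on the relevant quotient, forcing $f=0$ and, since $A[p]$ is then a successive extension of $\alpha_p$'s of the only possible type, $a=2$: this is the $(I_{1,1})^2$ stratum. For $\rank M_{HW}=2$ one gets $f=2$, $a=0$, the ordinary stratum $(\Z/p\Z)^2\oplus\mu_p^2$.

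The remaining work is to separate the two cases with $\rank M_{HW}=1$, which are $(f,a)=(1,1)$ and $(f,a)=(0,1)$; here $a=1$ in both, so the discriminating invariant is $f$. When $\rank M_{HW}=1$, write $W=\Fbar^2$ with the $\sigma$-linear map $M_{HW}$; then $\im M_{HW}$ is a line $L\subseteq W$ and $\ker M_{HW}$ is a line $L'\subseteq W$. Iterating, $M_{HW}M_{HW}^{\sigma}$ is a $\sigma^2$-linear endomorphism whose image is $M_{HW}(L^{\sigma})$, which is either all of $L$ (if $L^{\sigma}\not\subseteq L'$) or $0$ (if $L^{\sigma}\subseteq L'$, equivalently $L$ is "Frobenius-nilpotent''). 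In the first case the stable image is the line $L$ and $F$ is bijective there, so $f=1$; one checks the $p$-divisible part is $\Z/p\Z\oplus\mu_p$ and the rest is the unique $I_{1,1}$, giving the $(1,1)$ stratum. In the second case $\bigcap_n\im(\cdots)=0$, so $f=0$, and since $M_{HW}\ne 0$ we still have $a=1$; the group scheme is $I_{2,1}$, the $(0,1)$ stratum. Translating "$L^{\sigma}\subseteq L'$'' back into matrix language is exactly the condition $M_{HW}M_{HW}^{\sigma}=0$, which is the criterion stated. I expect the main obstacle to be the careful bookkeeping in this semilinear-algebra step — matching the abstract Dieudonné-module description of each $BT_1$ type in the table to the concrete vanishing/non-vanishing of $M_{HW}$ and $M_{HW}M_{HW}^{\sigma}$, and in particular justifying that on a $2$-dimensional semilinear module the pair $(\rank M_{HW},\ \text{``is }M_{HW}M_{HW}^{\sigma}=0\text{''})$ already determines the isomorphism class; once that dictionary is in place, the four bullet points follow by simply going through the four possibilities for $\rank M_{HW}\in\{0,1,2\}$ together with the nilpotency alternative in the rank-one case, and invoking the cited classification to supply the corresponding Ekedahl-Oort types.
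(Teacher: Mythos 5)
The paper offers no proof of this proposition: it is quoted directly from Ibukiyama--Katsura--Oort \cite[Lemma 1.1]{Ibukiyama1986SupersingularNumbers}, so there is no internal argument to compare yours against. Your Dieudonn\'e-module sketch is essentially the standard proof of that cited lemma, and it is sound. The two inputs you rely on --- the identification of $\mathbb{D}(\Jacc[p])$ with $H^1_{dR}(\calC)$ matching $F$ with the Frobenius action on $H^1(\calC,\calO_\calC)$ and $V$ with the Cartier operator on $H^0(\calC,\Omega^1_\calC)$, and the resulting formulas $a=2-\rank M_{HW}$ (from $a=\dim(\ker F\cap\ker V)$ together with $\im F=\ker V$) and $f=\rank(M_{HW}M_{HW}^{\sigma})$ (the stable rank, which stabilises after $g=2$ twisted iterations) --- are exactly what the four-way case analysis needs, and your semilinear bookkeeping in the rank-one case (the image of $M_{HW}M_{HW}^{\sigma}$ is $M_{HW}(L^{\sigma})$, hence either $L$ or $0$ according as $L^{\sigma}$ avoids or lies in the kernel) is correct. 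One point in your favour worth stating explicitly: read literally, the second bullet of the proposition is not an equivalence, since a rank-one $M_{HW}$ with $M_{HW}M_{HW}^{\sigma}=0$ yields $(f,a)=(0,1)$ rather than $(1,1)$; the bullets must be read as the mutually exclusive cases ``rank two'', ``rank one with $M_{HW}M_{HW}^{\sigma}\neq0$'', ``nonzero with $M_{HW}M_{HW}^{\sigma}=0$'', ``zero'', and your dichotomy in the rank-one case is precisely the needed correction. In a full write-up the only steps requiring more care are the standard facts that $f$ equals the stable rank (via the multiplicative part of $\Jacc[p]$) and that $a=2-\rank M_{HW}$; both are routine Dieudonn\'e theory, consistent with the paper's choice to cite rather than reprove the result.
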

For any $0 \leq f,a \leq 2$, the sets
\begin{align*}
\{X \in \calA_2(k) &: f(X) \leq f\}, & \{X \in \calA_2(k) &: a(X) \geq a\},
\end{align*}
are closed subschemes of $\calA_2$ of pure dimension $f+1$ and $3-\floor{\tfrac{a(a+1)}{2}}$ respectively \cite[Theorem 14.7]{vanderGeer1999CyclesVarieties}. As the Hasse-Witt matrix of $\Jacc$ can be determined from the coefficients of $\calC$, we can also deduce that both
\begin{align*}
V_{\leq f}&=\{\calC \in M_2(k) : f(\Jacc) \leq f\}, &
T_{\geq a}&=\{\calC \in M_2(k) : a(\Jacc) \geq a\},
\end{align*}
form closed subschemes of $M_2$, as both the $p$-rank and the $a$-number can be expressed as the vanishing of polynomials involving the coefficients of $M_{HW}$. From the fact that the Torelli map from $M_2$ to $\calA_2$ is injective and dense, we deduce that $\dim(V_{\leq f})=f+1$ and $\dim(T_{\geq a})=3-\floor{\tfrac{a(a+1)}{2}}$.\\

\subsection{Computing the Ekedahl-Oort strata}
Using Proposition \ref{HW_prop}, we propose an algorithm to compute the equations for the pullback of the Ekedahl-Oort strata and their intersections with the automorphism strata inside $M_2$.\\

Assuming that the characteristic of the base field is not two, given any genus two curve, we can always find a projective transformation that sends three of the Weierstrass points to $(0,0),(1,0)$ and $\infty$ and, as such, we can always transform any curve into one of the form
\begin{align*}
\calC\colon\quad y^2=x(x-1)(a_3x^3+a_2 x^2+a_1x+a_0).\\
\end{align*} 
Note that if we replace all the $a_i$ by $\lambda a_i$ for some $\lambda\in k^\times$, we obtain a curve that is also isomorphic to $\calC$. Therefore, we can consider $\calC$ as the point $[a_0:a_1:a_2:a_3]$ inside $\mathbb{P}^3$. The discriminant of $\calC$ is a degree ten polynomial in the variables $a_i$, and outside of the vanishing locus of this polynomial, each point of $\mathbb{P}^3$ corresponds to a point in $M_2$. Therefore, we can define a rational map
\begin{align*}
\phi\colon\quad\mathbb{P}^3\longrightarrow\mathfrak{X}_0
\end{align*}
by sending a point $[a_0:a_1:a_2:a_3]$ to the Igusa invariants corresponding to the curve defined by those coefficients. Let $M_{HW}$ denote the Hasse–Witt matrix associated to $\calC$, which has entries in $k[a_0,a_1,a_2,a_3]$. Let $(f,a)$ be the $p$-rank and $a$-rank corresponding to an Ekedahl-Oort type. We define the following subvarieties of $\mathbb{P}^3$:
\begin{align*}
X_{(1,1)} &= \mathbb{V}(\det(M_{HW})),\\
X_{(0,1)} &= \mathbb{V}(M_{HW} M_{HW}^\sigma),\\
X_{(0,2)} &= \mathbb{V}(M_{HW}).
\end{align*}

By Proposition \ref{HW_prop},  it is easy to check that $\phi(X_{(f,a)})=V_{\leq f}\cap T_{\geq a}$.\\

This is quite an inefficient way to compute which curves lie in the supersingular strata, and there are better algorithms in the literature for this purpose, such as the one developed by Pieper \cite{Pieper2022ConstructingJacobian}. However, this algorithm can be used quite successfully to compute the stratum $V_{\leq 1}=V_{\leq 1}\cap T_{\geq 1}$.\\

Assuming we are not working in characteristic two, $\mathfrak{X}\cong\mathbb{P}(1,2,3,5)$. Then, using this algorithm, we can show computationally that $V_{\leq 1}$ can be described inside $\mathbb{P}(1,2,3,5)$ as a reduced and irreducible hypersurface of degree $\frac{1}{2}(p-1)$, at least for all primes $p\leq71$.\\

In the Magma file, we have also managed to compute the singularities of these surfaces and the intersections with the stratum $W_{\geq C_2^2}$. These computations have been very useful for gathering evidence for the statements that are later proved in the following section.\\

Another consequence of these computations is the following result:
\begin{proposition}
The stratum $V_{\leq 1}$ is rational if $p\leq19$.
\end{proposition}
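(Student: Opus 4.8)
The plan is to leverage the explicit description of $V_{\leq 1}$, established earlier for all $p\leq 71$, as a reduced irreducible hypersurface in $\mathfrak{X}\cong\mathbb{P}(1,2,3,5)$ cut out by a homogeneous polynomial $F$ in the weighted coordinates $J_2,J_4,J_6,J_{10}$ of weighted degree $d=\tfrac12(p-1)$. The one structural observation I would extract is that when $p\leq 19$ we have $d\leq 9<10=2\operatorname{wt}(J_{10})$, so no monomial of $F$ is divisible by $J_{10}^2$; hence $F=A\,J_{10}+B$ with $A,B\in k[J_2,J_4,J_6]$ homogeneous of weighted degrees $d-5$ and $d$ respectively (with $A=0$ when $d<5$). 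Everything then reduces to eliminating one Igusa invariant.

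The generic case is $A\neq 0$. On the dense open locus of $V_{\leq 1}$ where $A$ does not vanish, the relation $F=0$ yields $J_{10}=-B/A$, so the projection $[J_2:J_4:J_6:J_{10}]\mapsto[J_2:J_4:J_6]$ is a rational map $V_{\leq 1}\dashrightarrow\mathbb{P}(1,2,3)$ that is dominant and generically one-to-one — over a general point $[a:b:c]$ the fibre is the single point $[a:b:c:-B(a,b,c)/A(a,b,c)]$, precisely because $F$ is linear in $J_{10}$. Since $\mathbb{P}(1,2,3)$ is rational (the chart $J_2\neq 0$ is an affine plane), so is $V_{\leq 1}$, and one even reads off an explicit birational parametrisation $[J_2:J_4:J_6]\mapsto[J_2:J_4:J_6:-B/A]$ (cleared of denominators in the obvious way).

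If instead $J_{10}$ is absent from $F$, I would run the identical elimination one weight down: if $J_6$ occurs in $F$ — necessarily at most linearly once $d<6$ — solve for it and project to $\mathbb{P}(1,2,5)$; failing that, solve for $J_4$ (onto $\mathbb{P}(1,3,5)$) or finally for $J_2$ (onto $\{J_2=0\}\cong\mathbb{P}(2,3,5)$, again rational). Reducedness and irreducibility of $F$ rule out the obstructions to this descent, namely $F$ being a proper power of, or divisible by, a coordinate. This settles $p\in\{3,5,7,11\}$ with no further input: for $p=3$ one is forced to $F=cJ_2$ and $V_{\leq 1}=\{J_2=0\}$; for $p=5,7$ the invariant $J_4$, resp. $J_6$, must occur (else $F$ would be a power of $J_2$, resp. divisible by $J_2$); and for $p=11$, if $J_{10}$ is absent then $J_6$ must occur (else $F$ is divisible by $J_2$), so one eliminates $J_6$.

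The crux — and the only step requiring genuine computation — is the range $p\in\{13,17,19\}$, where $d=6,8,9$ and $J_6$ (and for $d\geq 8$ also $J_4$) may enter $F$ to higher powers, so the weight-lowering trick is unavailable and one really does need $A\neq 0$. This cannot be bypassed in principle: were $A=0$, the variety $V_{\leq 1}$ would be the cone with apex $[0:0:0:1]$ over a plane curve of degree $\geq 6=-K_{\mathbb{P}(1,2,3)}$ in $\mathbb{P}(1,2,3)$, and a cone over a general such curve is non-rational. I would therefore verify, by direct inspection of the explicit defining polynomials produced by the algorithm of the preceding section (recorded in the accompanying Magma file), that $J_{10}$ does occur in $F$ for each of $p=13,17,19$; the first step above then supplies the desired birational map $\mathbb{P}(1,2,3)\dashrightarrow V_{\leq 1}$. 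Finally, $p\leq 19$ is exactly the regime in which this works: once $p\geq 23$ one has $d\geq 10$, the monomial $J_{10}^2$ may appear in $F$, and the elimination step collapses.
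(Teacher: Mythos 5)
Your proof is correct, and it is in essence the paper's proof with the black box opened: the paper simply cites Esser's criterion that a well-formed quasismooth hypersurface of weighted degree $d<2w$ in a weighted projective space of top weight $w$ is rational, applied with $w=5$ and $d=\tfrac12(p-1)\leq 9$, and the proof of that criterion is exactly your elimination of the top-weight variable. The one substantive difference is in which hypothesis gets checked computationally. Esser's statement requires quasismoothness (and well-formedness), which is what forces a monomial $J_{10}\cdot(\text{something})$ to appear and hence makes the projection to $\mathbb{P}(1,2,3)$ birational; the paper's stated computational input (reduced, irreducible, of degree $\tfrac12(p-1)$) does not literally include this, so strictly speaking the paper is also leaning on an unstated verification. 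Your version replaces that by the weaker and more direct check that $J_{10}$ occurs in $F$ for $p=13,17,19$, and handles the small primes by the forced-form arguments ($F$ divisible by, or a power of, a coordinate would contradict reducedness and irreducibility), which is self-contained and arguably more careful. Two minor points: your parenthetical that a cone over a general such curve is non-rational is a sanity check rather than a needed step, and when you invert the projection you should (as you do) exhibit the explicit rational inverse $[J_2:J_4:J_6]\mapsto[J_2:J_4:J_6:-B/A]$ rather than argue from generic injectivity alone, since in characteristic $p$ generically injective does not by itself imply birational.
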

\begin{proof}
Given a well-formed quasismooth hypersurface $X_d$ of weighted degree $d$ inside a weighted projective space whose highest weight is $w$, Esser proved that $X_d$ is rational whenever $d<2w$ \cite[Proposition 3.1]{Esser2024RationalHypersurfaces}. In our case, given that $d=\frac{1}{2}(p-1)$ and $w=5$, we obtain that $V_{\leq 1}$ is rational when $p<21$.
\end{proof}
The fact that the degree of $V_{\leq 1}$ grows linearly with $p$ is a sign that there may be a prime $p_0$ such that $V_{\leq 1}$ is not rational whenever $p\geq p_0$. However, $V_{\leq 1}$
is closely related to the determinantal variety of a $2\times 2$ matrix, which is a rational surface, so it is also conceivable that $V_{\leq 1}$ remains rational for all primes.\\

Therefore, we would be interested to know:
\begin{question}
\textit{Is $V_{\leq 1}$ rational in characteristic $p$ for all prime numbers?\\}
\end{question}

\section{The intersections of the strata} \label{intersections_sec}
In the previous sections we have described the automorphism and the Ekedahl-Oort strata. Now we will proceed to describe the intersections of these. Consider
\begin{align*}
W_{= G}&=\{\calC\in \Mtwo(k): \Aut(\calC)= G\},\\ V_{=(f,a)}&=\{\calC\in \Mtwo(k): f(\Jacc)=f \text{ and } a(\Jacc)=a\}.\\
\end{align*}
We can see that these are all strata of $M_2$, i.e. locally closed schemes, because we can express them as
\begin{align*}
W_{= G}&=W_{\geq G}\setminus\bigcup_{H> G} W_{\geq H},\\
V_{=(f,a)}&=\left(V_{\leq f}\cap T_{\geq a}\right)\setminus \left(\left(V_{\leq f-1}\cap T_{\geq a}\right)\cup\left(V_{\leq f}\cap T_{\geq a+1}\right)\right),
\end{align*}
where $V_{\leq-1}=T_{\geq3}=\emptyset$. Furthermore, $\overline{W_{= G}}=W_{\geq G}$.\\

\subsection{Dimensions of the intersections}
We will now identify the empty strata $ V_{=(f,a)} \cap W_{= G}$, and compute the dimension of those which are non-empty. In all of the cases, the strata are equidimensional. 

\begin{theorem} \label{intersection_theorem}
Let $G$ be the automorphism group of a genus two curve over an algebraically closed field of characteristic $p$, and let $f, a$ be two integers between zero and two. Then, the dimensions of the strata $V_{=(f,a)}\cap W_{= G}$, i.e. the loci in $M_2$ of  genus two curves with $p$-rank $f$, $a$-number $a$ and automorphism group $G$, are collected in the following tables for each possible value of $p$.\\

When $p=2$,
% Please add the following required packages to your document preamble:
% \usepackage[table,xcdraw]{xcolor}
% Beamer presentation requires \usepackage{colortbl} instead of \usepackage[table,xcdraw]{xcolor}
\begin{table}[H]
\centering
\begin{tabular}{cc|c|c|c|c|c|}
\cline{3-7}
                                                & &\cellcolor[HTML]{a7c957}{\color[HTML]{333333} $C_2$} & \cellcolor[HTML]{a7c957}{\color[HTML]{333333} $C_2^2$} & \cellcolor[HTML]{a7c957}{\color[HTML]{333333} $D_6$} & \cellcolor[HTML]{a7c957}{\color[HTML]{333333} $C_2^5$} & \cellcolor[HTML]{a7c957}{\color[HTML]{333333} $C_2\wr C_5$} \\ \hline
\multicolumn{1}{|c|}{\cellcolor[HTML]{a7c957}$2$} & \cellcolor[HTML]{a7c957}$0$ &{\color[HTML]{333333} $3$}                             & {\color[HTML]{333333} $2$}                                       & {\color[HTML]{333333} $1$}                             & {\color[HTML]{C0C0C0}  $-1$}                              & {\color[HTML]{C0C0C0}  $-1$}                                   \\ \hline
\multicolumn{1}{|c|}{\cellcolor[HTML]{a7c957}$1$} & \cellcolor[HTML]{a7c957}$1$ &{\color[HTML]{333333} $2$}                             & {\color[HTML]{C0C0C0}  $-1$}                                      & {\color[HTML]{C0C0C0}  $-1$}                            & {\color[HTML]{C0C0C0}  $-1$}                              & {\color[HTML]{C0C0C0}  $-1$}                                   \\ \hline
\multicolumn{1}{|c|}{\cellcolor[HTML]{a7c957}$0$} & \cellcolor[HTML]{a7c957}$1$ &{\color[HTML]{C0C0C0}  $-1$}                            & {\color[HTML]{C0C0C0}  $-1$}                                      & {\color[HTML]{C0C0C0}  $-1$}                            & {\color[HTML]{333333} $1$}                               & {\color[HTML]{333333} $0$}                                    \\ \hline
\multicolumn{1}{|c|}{\cellcolor[HTML]{a7c957}$0$} & \cellcolor[HTML]{a7c957}$2$ &{\color[HTML]{C0C0C0}  $-1$}                            & {\color[HTML]{C0C0C0}  $-1$}                                      & {\color[HTML]{C0C0C0}  $-1$}                            & {\color[HTML]{C0C0C0}  $-1$}                                & {\color[HTML]{C0C0C0}  $-1$}                                    \\ \hline
\end{tabular}
\caption{Dimensions of $V_{=(f,a)}\cap W_{= G}$ when $p=2$.}
\end{table}

When $p=3$,
% Please add the following required packages to your document preamble:
% \usepackage[table,xcdraw]{xcolor}
% Beamer presentation requires \usepackage{colortbl} instead of \usepackage[table,xcdraw]{xcolor}
% Please add the following required packages to your document preamble:
% \usepackage[table,xcdraw]{xcolor}
% Beamer presentation requires \usepackage{colortbl} instead of \usepackage[table,xcdraw]{xcolor}
\begin{table}[H]
\centering
\begin{tabular}{cc|c|c|c|c|c|c|}
\cline{3-8}
                                                &                           & \cellcolor[HTML]{a7c957}$C_2$ & \cellcolor[HTML]{a7c957}$C_2^2$ & \cellcolor[HTML]{a7c957}$D_4$ & \cellcolor[HTML]{a7c957}$D_6$ & \cellcolor[HTML]{a7c957}$\GL_2(\mathbb{F}_3)$ & \cellcolor[HTML]{a7c957}$C_{10}$ \\ \hline
\multicolumn{1}{|c|}{\cellcolor[HTML]{a7c957}$2$} & \cellcolor[HTML]{a7c957}$0$ & $3$                             & $2$                                       & $1$                             & $1$                             & $0$                    & {\color[HTML]{C0C0C0} $-1$}        \\ \hline
\multicolumn{1}{|c|}{\cellcolor[HTML]{a7c957}$1$} & \cellcolor[HTML]{a7c957}$1$ & $2$                             &  $1$                                       & {\color[HTML]{C0C0C0} $-1$}     & {\color[HTML]{C0C0C0} $-1$}     & {\color[HTML]{C0C0C0} $-1$}                     & {\color[HTML]{C0C0C0} $-1$}        \\ \hline
\multicolumn{1}{|c|}{\cellcolor[HTML]{a7c957}$0$} & \cellcolor[HTML]{a7c957}$1$ & $1$                             & {\color[HTML]{C0C0C0} $-1$}                 & {\color[HTML]{C0C0C0} $-1$}       & {\color[HTML]{C0C0C0} $-1$}       &  {\color[HTML]{C0C0C0} $-1$}               & $0$                                \\ \hline
\multicolumn{1}{|c|}{\cellcolor[HTML]{a7c957}$0$} & \cellcolor[HTML]{a7c957}$2$ & {\color[HTML]{C0C0C0} $-1$}                          & {\color[HTML]{C0C0C0} $-1$}                 & {\color[HTML]{C0C0C0} $-1$}       & {\color[HTML]{C0C0C0} $-1$}       &  {\color[HTML]{C0C0C0} $-1$}               & {\color[HTML]{C0C0C0} $-1$}                                \\ \hline
\end{tabular}
\caption{Dimensions of $V_{=(f,a)}\cap W_{= G}$ when $p=3$.}
\end{table}

When $p=5$,

\begin{table}[H]
\centering
\begin{tabular}{cc|c|c|c|c|c|}
\cline{3-7}
                                                &                           & \cellcolor[HTML]{a7c957}$C_2$ & \cellcolor[HTML]{a7c957}$C_2^2$ & \cellcolor[HTML]{a7c957}$D_4$ & \cellcolor[HTML]{a7c957}$D_6$ & \cellcolor[HTML]{a7c957}$\SL_2(\mathbb{F}_5)$ \\ \hline
\multicolumn{1}{|c|}{\cellcolor[HTML]{a7c957}$2$} & \cellcolor[HTML]{a7c957}$0$ & $3$                             & $2$                                       & $1$                             & $1$                             & {\color[HTML]{C0C0C0} $-1$}                     \\ \hline
\multicolumn{1}{|c|}{\cellcolor[HTML]{a7c957}$1$} & \cellcolor[HTML]{a7c957}$1$ & $2$                             & $1$                                       & {\color[HTML]{C0C0C0} $-1$}     & {\color[HTML]{C0C0C0} $-1$}     & {\color[HTML]{C0C0C0} $-1$}                     \\ \hline
\multicolumn{1}{|c|}{\cellcolor[HTML]{a7c957}$0$} & \cellcolor[HTML]{a7c957}$1$ & $1$                             & {\color[HTML]{C0C0C0} $-1$}               & {\color[HTML]{C0C0C0} $-1$}     & {\color[HTML]{C0C0C0} $-1$}     & {\color[HTML]{C0C0C0} $-1$}                     \\ \hline
\multicolumn{1}{|c|}{\cellcolor[HTML]{a7c957}$0$} & \cellcolor[HTML]{a7c957}$2$ & {\color[HTML]{C0C0C0} $-1$}     & {\color[HTML]{C0C0C0} $-1$}               & {\color[HTML]{C0C0C0} $-1$}     & {\color[HTML]{C0C0C0} $-1$}     & $0$                                             \\ \hline
\end{tabular}
\caption{Dimensions of $V_{=(f,a)}\cap W_{= G}$ when $p=5$.}
\end{table}
\newpage

When $p\geq 7$,\\
\begin{table}[H]
\centering
\begin{tabular}{cc|c|c|c|c|}
\cline{3-6}
                                                &                           & \cellcolor[HTML]{a7c957}$C_2$ & \cellcolor[HTML]{a7c957}$C_2^2$ & \cellcolor[HTML]{a7c957}$D_4$ & \cellcolor[HTML]{a7c957}$D_6$ \\ \hline
\multicolumn{1}{|c|}{\cellcolor[HTML]{a7c957}$2$} & \cellcolor[HTML]{a7c957}$0$ & $3$ & $2$ & $1$ & $1$ \\ \hline
\multicolumn{1}{|c|}{\cellcolor[HTML]{a7c957}$1$} & \cellcolor[HTML]{a7c957}$1$ & $2$ & $1$ & {\color[HTML]{C0C0C0} $-1$} & {\color[HTML]{C0C0C0} $-1$} \\ \hline
\multicolumn{1}{|c|}{\cellcolor[HTML]{a7c957}$0$} & \cellcolor[HTML]{a7c957}$1$ & $1$ & {\color[HTML]{C0C0C0} $-1$} & {\color[HTML]{C0C0C0} $-1$} & {\color[HTML]{C0C0C0} $-1$} \\ \hline
\multicolumn{1}{|c|}{\cellcolor[HTML]{a7c957}$0$} & \cellcolor[HTML]{a7c957}$2$ & $0$ iff $p\geq29$ & $0$ iff $p\geq17$ & $0$ iff $p\geq13$ & $0$ iff $p\geq11$ \\ \hline
\end{tabular}
\caption{Dimensions of $V_{=(f,a)}\cap W_{= G}$ when $p\geq7$ (I).}

\end{table}

\begin{table}[H]
\centering
\begin{tabular}{cc|c|c|c|}
\cline{3-5}
                                                &                           & \cellcolor[HTML]{a7c957}$C_3\rtimes D_4$ & \cellcolor[HTML]{a7c957}$\GL_2(\mathbb{F}_3)$ & \cellcolor[HTML]{a7c957}$C_{10}$ \\ \hline
\multicolumn{1}{|c|}{\cellcolor[HTML]{a7c957}$2$} & \cellcolor[HTML]{a7c957}$0$ & $0$ iff $p\equiv 1\pmod 6$ & $0$ iff $p\equiv1,3 \pmod 8$ & $0$ iff $p\equiv1\phantom{,2} \pmod 5$ \\ \hline
\multicolumn{1}{|c|}{\cellcolor[HTML]{a7c957}$1$} & \cellcolor[HTML]{a7c957}$1$ & {\color[HTML]{C0C0C0} $-1$} & {\color[HTML]{C0C0C0} $-1$} & {\color[HTML]{C0C0C0} $-1$} \\ \hline
\multicolumn{1}{|c|}{\cellcolor[HTML]{a7c957}$0$} & \cellcolor[HTML]{a7c957}$1$ & {\color[HTML]{C0C0C0} $-1$} & {\color[HTML]{C0C0C0} $-1$} & $0$ iff $p\equiv2,3 \pmod 5$ \\ \hline
\multicolumn{1}{|c|}{\cellcolor[HTML]{a7c957}$0$} & \cellcolor[HTML]{a7c957}$2$ & $0$ iff $p\equiv 5 \pmod 6$ & $0$ iff $ p\equiv 5,7 \pmod 8$ & $0$ iff $p\equiv 4\phantom{,2} \pmod 5$ \\ \hline
\end{tabular}
\caption{Dimensions of $V_{=(f,a)}\cap W_{= G}$ when $p\geq7$ (II).}
\end{table} 
\end{theorem}
\begin{proof}
The values for $\dim(V_{=(2,0)}\cap W_{= G})$ where $G\in\{C_2,C_2^2,D_4, D_6\}$ follow from the description of the dimensions of the automorphism strata in Section \ref{automorphism_section}.\\

Likewise, the values for $\dim(V_{=(f,a)}\cap W_{= C_2})$ where $(f,a)\in\{(2,0),(1,1),(0,1)\}$ follow from the description of the dimensions of the Ekedahl-Oort strata given in Section \ref{EO_section}.\\

The $p$-ranks of the zero-dimensional strata of Subsection \ref{zerostrata_subsection} were computed by Ibukiyama, Katsura and Oort \cite{Ibukiyama1986SupersingularNumbers}. \\

Finally, the second and third rows of these tables boil down to the following result:
\begin{proposition}\cite[Propositions 1.3 and 1.10]{Ibukiyama1986SupersingularNumbers}
Let $\calC$ be a genus two curve with $\Aut(\calC)> C_2^2$. Then, the $p$-rank of $\Jacc$ is either zero or two. Furthermore, if $\Jacc$ is supersingular and $\Aut(\calC)\geq C_2^2$, then $\Jacc$ is superspecial.
\end{proposition}
The proof of this relies on the fact that, as shown in Subsection \ref{C22_section}, if $\Aut(\calC)\geq C_2^2$, then $\Jacc$ is isogenous to the product of two elliptic curves. Moreover, if $\Aut(\calC)> C_2^2$ one can easily see that these two elliptic curves must be isogenous, and therefore they must have the same $p$-rank, which implies that $f(\Jacc)\neq 1$. As a consequence of this statement, we deduce that $V_{=1,1}\cap W_{=G}=\varnothing$ unless $G=C_2$ or $C_2^2$, and $V_{=0,1}\cap W_{=G}=\varnothing$ unless $G=C_2$.\\

Finally, we know that $\dim(V_{=1,1}\cap W_{=C_2^2})=1$, because $\dim(V_{\leq1})=\dim(W_{\geq C_2^2})=2$ and these two strata intersect transversely, as will become apparent in the proof of Theorem \ref{irreducible_components_theorem}.\qedhere\\
\end{proof}
From this table, we can draw conclusions that may be difficult to prove through other methods. For instance,
\begin{corollary}
There are no Jacobians of genus two curves with $p$-rank one and an automorphism of order greater than two.\\
\end{corollary}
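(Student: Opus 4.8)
The plan is to read the conclusion off the $(f,a)=(1,1)$ row of the tables in Theorem~\ref{intersection_theorem}, together with one elementary remark. First I would observe that for the Jacobian of a genus two curve over $\bar k$, $p$-rank one forces $a$-rank one: since $f+a\le 2$, the only Ekedahl--Oort stratum of $\calA_2$ with $f=1$ is the one with $a=1$ (Table~2, equivalently Proposition~\ref{HW_prop}). Hence ``$\Jacc$ has $p$-rank one'' is the same condition as ``$\calC\in V_{=(1,1)}$''. Scanning the $(1,1)$ rows of the four tables, the stratum $V_{=(1,1)}\cap W_{=G}$ is non-empty only for $G=C_2$ (in every characteristic) and $G=C_2^2$ (in every characteristic $p\neq 2$). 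Both $C_2$ and $C_2^2$ are elementary abelian $2$-groups, so every non-identity element has order two, and the corollary follows.

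For completeness I would also record the conceptual argument that underlies those table entries and does not quote the full tables. Suppose $\calC$ admits an automorphism of order $n>2$. From the classification recalled in Section~\ref{automorphism_section}, $\Aut(\calC)$ is then one of $D_4$, $C_{10}$, $D_6$, $C_3\rtimes D_4$, $\GL_2(\F_3)$, $C_2\wr C_5$ or $\SL_2(\F_5)$. Every group on this list except $C_{10}$ and $\SL_2(\F_5)$ strictly contains $C_2^2$, so for those the proposition of Ibukiyama--Katsura--Oort quoted inside the proof of Theorem~\ref{intersection_theorem} immediately gives $f(\Jacc)\in\{0,2\}$.

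For the two remaining groups I would invoke the explicit models of subsection~\ref{zerostrata_subsection}: $\Aut(\calC)\cong C_{10}$ forces $\calC\cong\{y^2+y=x^5\}$, and $\Aut(\calC)\cong\SL_2(\F_5)$ (which occurs only in characteristic five) forces $\calC\cong\{y^2=x^5-x\}$, and in each case a direct Hasse--Witt computation — already carried out by Ibukiyama, Katsura and Oort — shows that the $p$-rank is $0$ or $2$. In every case $f(\Jacc)\neq 1$, which is what we want.

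The only genuine subtlety is precisely that $C_{10}$ and $\SL_2(\F_5)$ do not contain $C_2^2$ and hence fall outside the scope of the Ibukiyama--Katsura--Oort reduction; this is the step I expect to require the most attention. However, since both correspond to a single explicit curve whose $p$-rank is recorded in subsection~\ref{zerostrata_subsection}, the argument is routine, and in practice the cleanest presentation is simply to cite the $(1,1)$ row of Theorem~\ref{intersection_theorem}.
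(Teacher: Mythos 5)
Your proposal is correct and matches the paper's treatment: the corollary is stated as an immediate consequence of the $(f,a)=(1,1)$ row of the tables in Theorem~\ref{intersection_theorem}, whose only non-empty entries are $G=C_2$ and $G=C_2^2$, both elementary abelian $2$-groups. Your supplementary discussion (the Ibukiyama--Katsura--Oort reduction for groups strictly containing $C_2^2$, plus the separate check of the zero-dimensional strata for $C_{10}$ and $\SL_2(\F_5)$) is exactly the reasoning the paper uses inside the proof of the theorem itself, so nothing is missing.
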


\subsection{Irreducible components of the intersections}

% Please add the following required packages to your document preamble:
% \usepackage[table,xcdraw]{xcolor}
% Beamer presentation requires \usepackage{colortbl} instead of \usepackage[table,xcdraw]{xcolor}
% Please add the following required packages to your document preamble:
% \usepackage[table,xcdraw]{xcolor}
% Beamer presentation requires \usepackage{colortbl} instead of \usepackage[table,xcdraw]{xcolor}
A natural related question that may arise after Theorem \ref{intersection_theorem} is the following:

 \begin{question}
 Are the intersections of these strata irreducible? If not, how many different irreducible components does the Zariski closure have?
 \end{question}\newpage
The answer is given by the following theorem:
\begin{theorem} \label{irreducible_components_theorem}
If $p\leq5$, the Zariski closures of the non-empty strata $V_{=(f,a)}\cap W_{= G}$ are all irreducible.\\

If $p\geq 7$ the number of irreducible components of $\,\overline{V_{=(f,a)}\cap W_{= G}}$ are the following:\\
\begin{table}[H]
\centering
\begin{tabular}{cc|c|c|c|c|}
\cline{3-6}
                                                &                           & \cellcolor[HTML]{a7c957}$C_2$ & \cellcolor[HTML]{a7c957}$C_2^2$ & \cellcolor[HTML]{a7c957}$D_4$ & \cellcolor[HTML]{a7c957}$D_6$ \\ \hline
\multicolumn{1}{|c|}{\cellcolor[HTML]{a7c957}$2$} & \cellcolor[HTML]{a7c957}$0$ & $1$ & $1$ & $1$ & $1$ \\ \hline
\multicolumn{1}{|c|}{\cellcolor[HTML]{a7c957}$1$} & \cellcolor[HTML]{a7c957}$1$ & $1$ & $n_{(1,1),C_2^2}$ & {\color[HTML]{C0C0C0} $0$} & {\color[HTML]{C0C0C0} $0$} \\ \hline
\multicolumn{1}{|c|}{\cellcolor[HTML]{a7c957}$0$} & \cellcolor[HTML]{a7c957}$1$ & $n_{(0,1),C_2}$ & {\color[HTML]{C0C0C0} $0$} & {\color[HTML]{C0C0C0} $0$} & {\color[HTML]{C0C0C0} $0$} \\ \hline
\multicolumn{1}{|c|}{\cellcolor[HTML]{a7c957}$0$} & \cellcolor[HTML]{a7c957}$2$ & $n_{(0,2),C_2}$ & $n_{(0,2),C_2^2}$ & $n_{(0,2),D_4}$ & $n_{(0,2),D_6}$ \\ \hline
\end{tabular}
\caption{Number of irreducible components of $\overline{V_{=(f,a)}\cap W_{= G}}$ when $p\geq7$ (I).}
\label{tableI}
\end{table}

\begin{table}[H]
\centering
\begin{tabular}{cc|c|c|c|}
\cline{3-5}
                                                &                           & \cellcolor[HTML]{a7c957}$C_3\rtimes D_4$ & \cellcolor[HTML]{a7c957}$\GL_2(\mathbb{F}_3)$ & \cellcolor[HTML]{a7c957}$C_{10}$ \\ \hline
\multicolumn{1}{|c|}{\cellcolor[HTML]{a7c957}$2$} & \cellcolor[HTML]{a7c957}$0$ & $1$ iff $p\equiv 1\pmod 6$ & $1$ iff $p\equiv1,3 \pmod 8$ & $1 \text{ iff } p \equiv 1\phantom{,2} \pmod 5$ \\ \hline
\multicolumn{1}{|c|}{\cellcolor[HTML]{a7c957}$1$} & \cellcolor[HTML]{a7c957}$1$ & {\color[HTML]{C0C0C0} $0$} & {\color[HTML]{C0C0C0} $0$} & {\color[HTML]{C0C0C0} $0$} \\ \hline
\multicolumn{1}{|c|}{\cellcolor[HTML]{a7c957}$0$} & \cellcolor[HTML]{a7c957}$1$ & {\color[HTML]{C0C0C0} $0$} & {\color[HTML]{C0C0C0} $0$} & $1$ iff $p\equiv2,3 \pmod 5$ \\ \hline
\multicolumn{1}{|c|}{\cellcolor[HTML]{a7c957}$0$} & \cellcolor[HTML]{a7c957}$2$ & $1$ iff $p\equiv 5 \pmod 6$ & $1$ iff $ p\equiv 5,7 \pmod 8$ & $1 \text{ iff } p \equiv 4\phantom{,2} \pmod 5$ \\ \hline
\end{tabular}
\caption{Number of irreducible components of $\overline{V_{=(f,a)}\cap W_{= G}}$ when $p\geq7$ (II).}
\label{tableII}
\end{table}

The values of $n_{(f,a),G}$ in the table \ref{tableI} are\\
\begin{adjustbox}{width=\textwidth}
\begin{minipage}{1.1\textwidth}
\begin{align*}
n_{(1,1),C_2^2}&=\frac{p-1}{12}+\frac{1-\left(\frac{-1}{p}\right)}{4}+\frac{1-\left(\frac{-3}{p}\right)}{3},\\
n_{(0,1),C_2}&= \frac{p^2 - 1}{2880} 
+ \frac{(p+1)\left( 1 - \left( \frac{-1}{p} \right) \right)}{64}
+ \frac{5(p - 1)\left( 1 + \left( \frac{-1}{p} \right) \right)}{192}
+ \frac{(p + 1)\left( 1 - \left( \frac{-3}{p} \right) \right)}{72}
+ \frac{(p - 1)\left( 1 + \left( \frac{-3}{p} \right) \right)}{36}\\
&+ \left(\frac{2}{5} \text { iff } p \equiv 2, 3\pmod{5}\right)+\left(\frac{1}{4} \text { iff } p \equiv 3, 5\pmod{8}\right)+\left(\frac{1}{6} \text { iff } p \equiv 5\pmod{12}\right),\\
n_{(0,2),C_2}&=\frac{(p-1)\left(p^2-35 p+346\right)}{2880}-\frac{1-\left(\frac{-1}{p}\right)}{32}-\frac{1-\left(\frac{-2}{p}\right)}{8}-\frac{1-\left(\frac{-3}{p}\right)}{9}-\left(\frac{1}{5} \text { iff } p \equiv 4 \pmod 5\right),\\
n_{(0,2),C_2^2}&=\frac{(p-1)(p-17)}{48}+\frac{1-\left(\frac{-1}{p}\right)}{8}+\frac{1-\left(\frac{-2}{p}\right)}{2}+\frac{1-\left(\frac{-3}{p}\right)}{2},\\
n_{(0,2),D_4}&=\frac{p-1}{8}-\frac{1-\left(\frac{-1}{p}\right)}{8}-\frac{1-\left(\frac{-2}{p}\right)}{4}-\frac{1-\left(\frac{-3}{p}\right)}{2},\\
n_{(0,2),D_6}&=\frac{p-1}{6}-\frac{1-\left(\frac{-2}{p}\right)}{2}-\frac{1-\left(\frac{-3}{p}\right)}{3},\\
\end{align*}
\end{minipage}
\end{adjustbox}
where $\left(\frac{n}{p}\right)$ is the Legendre symbol.
\end{theorem}

\begin{proof}
We have seen in Section \ref{automorphism_section} that if $G=C_2, C_{2}^2, D_4$ or $D_6$, $\overline{V_{=(2,0)}\cap W_{= G}}=W_{\geq G}$ are all irreducible.\\

We know that $\overline{V_{=(1,1)}\cap W_{= C_2}}=T_{\geq1}$ is irreducible as a consequence of the work of van der Geer, who proved that $T_{\geq a}\subset\calA_g$ is irreducible whenever $a<g$ \cite[Theorem 2.11]{vanderGeer1999CyclesVarieties}.\\

The fact that the supersingular locus of $\calA_2$ is not irreducible when $p\geq13$ is due to Katsura and Oort, who proved that the number of irreducible components $n_{(0,1),C_2}$ is $H_2(1,p)$, the class number of the non-principal genus \cite{Katsura1987FamiliesSurfaces}. The closed formula for $H_2(1,p)$ has been computed by Hashimoto and Ibukiyama \cite{Hashimoto1980OnForms}.\\

The values of all the zero-dimensional components (the last row of Table \ref{tableI} and all the values of Table \ref{tableII}) are again work of Ibukiyama, Katsura and Oort \cite[Propositions 1.11-1.13 and Theorem 3.3]{Ibukiyama1986SupersingularNumbers}.\\

Our main contribution to this table is computing the value of $n_{(1,1),C_2^2}$.
\end{proof}

\begin{proposition}
The stratum $W_{\geq C_2^2} \cap V_{\leq 1}$ is the union of $n$ rational curves, where $n$ is the number of supersingular elliptic curves in characteristic $p$, which is $n=1$ if $p=2,3$ or $5$ and
\begin{align*}
n&=\frac{p-1}{12}+\frac{1-\left(\frac{-1}{p}\right)}{4}+\frac{1-\left(\frac{-3}{p}\right)}{3}
\end{align*}
otherwise.
\end{proposition}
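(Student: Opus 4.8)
The plan is to transport the question to the moduli space $\mell$ of Theorem~\ref{iso_theorem}, where the two elliptic quotients of a curve in $W_{\geq C_2^2}$ serve as coordinates, count the components there, and push forward.

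First I would carry out a $p$-rank computation. For $\calC \in W_{\geq C_2^2}$ and any non-hyperelliptic involution $\sigma$, the isogeny $\psi : E_1 \times E_2 \to \Jacc$ of subsection~\ref{C22_section}, whose kernel is isomorphic to $(\Z/2\Z)^2$, shows that $\Jacc$ is isogenous to $E_1 \times E_2$. Since the $p$-rank is additive on products and invariant under isogeny, $f(\Jacc) = f(E_1) + f(E_2)$, and each summand is $0$ or $1$; hence $\calC \in V_{\leq 1}$ if and only if at least one of $E_1, E_2$ is supersingular, a condition independent of the chosen $\sigma$. Using the isomorphism $\Phi : \calM \isomto \mell$ and the isomorphism $\mquot \cong \mells$, the set $\pi^{-1}(W_{\geq C_2^2} \cap V_{\leq 1}) \subset \mquot$ is identified with the image in $\mells$ of
\[
\bigl\{\, ((E_1,\phi_1),(E_2,\phi_2)) \in X(2) \times X(2) : E_1 \text{ or } E_2 \text{ is supersingular} \,\bigr\} \setminus \Delta .
\]

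Next I would count the irreducible components of this locus inside $\mellc = (X(2)\times X(2))/S_3$. The forgetful map $X(2) \to X(1)$ is the quotient by the $S_3$-action $\lambda \mapsto f_\tau(\lambda)$, so the supersingular locus $S \subset X(2)$ is a finite, $S_3$-stable set whose $S_3$-orbits are exactly the supersingular $j$-invariants; set $n = |S/S_3|$. The locus above is the image of $(S\times X(2)) \cup (X(2) \times S)$, and orbit by orbit it decomposes into $n$ ``horizontal'' irreducible curves---each isomorphic to the quotient of $X(2) \cong \mathbb{A}^1 \setminus \{0,1\}$ by the $S_3$-stabiliser of a supersingular point, hence a rational curve---together with $n$ ``vertical'' ones. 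These $2n$ curves are pairwise distinct, being separated by the locally constant values of $j(E_1)$ and $j(E_2)$, and none lies inside the diagonal $\Delta$, so passing to $\mell = \mellc \setminus \Delta$ leaves $2n$ rational components; the involution swapping the two copies of $X(2)$ pairs each horizontal curve with the vertical curve of the same orbit, fixing none of them, so $\mquot \cong \mells$ contains exactly $n$ rational irreducible components of $\pi^{-1}(W_{\geq C_2^2} \cap V_{\leq 1})$.

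Finally I would descend along $\pi$. Any such component $Z$ satisfies $\pi(Z) \subset W_{\geq C_2^2} \cap V_{\leq 1}$, which is one-dimensional, whereas $W_{\geq D_4} \cap V_{\leq 1}$ is zero-dimensional by the dimension tables of Theorem~\ref{intersection_theorem} (equivalently, $W_{\geq D_4}$ is a curve whose generic point has $p$-rank $2$); hence $Z \not\subset \pi^{-1}(W_{\geq D_4})$. As $\pi$ is injective away from $\pi^{-1}(W_{\geq D_4} \setminus W_{\geq \GL_2(\mathbb{F}_3)})$, the restriction $\pi|_Z$ is generically injective, so $\pi(Z)$ is again a rational curve, and distinct components have distinct images because a general point of $\pi(Z)$ lies outside $W_{\geq D_4}$ and so has a unique $\pi$-preimage. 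Since $\pi$ is surjective onto $W_{\geq C_2^2}$, the union of these $n$ rational curves is exactly $W_{\geq C_2^2} \cap V_{\leq 1}$; and $n$ is the classical count of supersingular $j$-invariants in characteristic $p$, equal to $1$ for $p<7$ and to the displayed formula otherwise (this is also the origin of the value $n_{(1,1),C_2^2}$ in Theorem~\ref{irreducible_components_theorem}). The step needing the most care is the bookkeeping in the middle part---matching the $S_3$-orbits on the supersingular locus of $X(2)$ with the supersingular $j$-invariants, and ruling out spurious coincidences or collapses of components when forming $\mellc$ and $\mquot$ and deleting $\Delta$. The argument assumes $p$ odd, since the elliptic-quotient picture of subsection~\ref{connection_subsection} requires $p \neq 2$; when $p=2$ one has $n=1$ and checks directly from the parametrisations of section~\ref{automorphism_section} that $W_{\geq C_2^2} \cap V_{\leq 1}$ is the single rational curve $W_{\geq C_2^5} = \mathbb{V}(J_2,J_4,J_6) \subset \mathfrak{X}_0$.
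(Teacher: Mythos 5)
Your proposal is correct and follows essentially the same route as the paper: reduce to the moduli space of pairs $(E_1,\phi_1),(E_2,\phi_2)$ via $\Phi$, use additivity and isogeny-invariance of the $p$-rank to characterise $V_{\leq 1}$ by ``at least one $E_i$ supersingular'', match irreducible components with supersingular $j$-invariants, and push forward along $\pi$ (with the same direct computation $W_{\geq C_2^2}\cap V_{\leq 1}=W_{\geq C_2^5}$ for $p=2$). Your version is somewhat more explicit than the paper's on two points it leaves implicit --- that distinct components have distinct images under $\pi$ and that the images are rational --- but the underlying decomposition is identical to the paper's curves $\pi(C_{E_0})$.
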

\begin{proof}
When $p=2$, we can use the results in Section \ref{EO_section} to show that $V_{\leq 1}=\mathbb{V}(J_2,J_4)$, and by computing the intersection with $W_{\geq C_2^2}$, we get 
\begin{align*}
W_{\geq C_2^2} \cap V_{\leq 1}=W_{\geq C_2^5}=V_{\leq0}=\mathbb{V}(J_2,J_4,J_6),
\end{align*} 
which is irreducible.\\

If $p\neq2$, from the description in Subsection \ref{connection_subsection}, we saw that there was a morphism $\pi:\mells\rightarrow W_{\geq C_2^2}$ which was an isomorphism when restricted to the open subset $W_{\geq C_2^2}\setminus W_{D_4}$. We also proved that there was an isomorphism $\Phi$ between $\mquot$ and $\mells$, which is the quotient of 
\begin{align*}
\mell=((X(2)\times X(2))\setminus\Delta)/S_3
\end{align*}
by the action of $C_2$ that swaps around the copies of $X(2)$. Furthermore, we saw in Section \ref{C22_section} that if $\Phi((\calC,\sigma))=\{(E_1,\phi_1),(E_2,\phi_2)\}$, $\Jacc$ is isogenous to $E_1\times E_2$. Therefore, $\calC\in W_{\geq C_2^2} \cap V_{\leq 1}$ if and only if $f(E_1\times E_2)\leq1$.  As $f(E_1\times E_2)=f(E_1)+f(E_2)$, we deduce that this happens if and only if at least one of $E_1$ or $E_2$ is supersingular. 

Let $E_0$ be a supersingular elliptic curve, and $\phi_0$ a level 2 structure on $E_0$.
Consider the following subvariety\\
\begin{adjustbox}{width=\textwidth}
\begin{minipage}{1.1\textwidth}
\begin{align*}
E_0\times X(2)=\{\{(E_0,\phi_0),(E,\phi)\} \text{ for some }(E,\phi) \in X(2) \text{ with } (E,\phi) \neq(E_0,\phi_0)\}\subset\mells.\\
\end{align*}
\end{minipage}
\end{adjustbox}

Note that the subscheme $E_0\times X(2)$ does not depend on the choice of the level 2 structure $\phi_0$. This is because, as we have seen, if $\phi_0$ and $\phi_0'$ are two structures on $E_0$, there exists a $\tau\in \Aut((\mathbb{Z}/2\mathbb{Z})^2)$ such that $\phi_0'=\phi_0\circ\tau$. But then, we have that for any $(E,\phi)\in X(2)$,
\begin{align*}
\{(E_0,\phi_0'),(E,\phi)\}=\{(E_0,\phi_0),(E,\phi\circ\tau^{-1})\}
\end{align*}
in $\mells$. We know that $E_0\times X(2)$ is irreducible, as it is the image under the quotient map of a variety isomorphic to $X(2)$, which is irreducible.\\

Let $C_{E_0} :=\Phi^{-1}(E_0\times X(2))\subset \mquot$. From the fact that $\Phi$ is an isomorphism and that any pair of elliptic curves where at least one of them is supersingular lies inside a copy of $E_0\times X(2)$ for some $E_0$, we deduce that any pair $(\calC,\sigma)$ with $\calC\in W_{\geq C_2^2} \cap V_{\leq 1}$ must lie inside a $C_{E_0}$ for some supersingular $E_0$. From the fact that the morphism $\pi:\mells\rightarrow W_{\geq C_2^2}$ is finite and surjective, we deduce that $\pi(C_{E_0})$ is also an irreducible curve, and that
\begin{align*}
W_{\geq C_2^2} \cap V_{\leq 1}=\bigcup_{E_0\text{ supersingular}} \pi(C_{E_0}).\\
\end{align*}

Hence, the number of irreducible components of $W_{\geq C_2^2} \cap V_{\leq 1}$ is the same as the number of supersingular elliptic curves in characteristic $p$. This quantity was computed by Igusa \cite{Igusa1958ClassDiscriminant}.
\end{proof}
\vspace{10pt}
\section{Future directions}
It would be interesting to compute the intersections of the Ekedahl-Oort and the automorphism strata inside either the moduli space of genus $3$ curves $M_3$, or the hyperelliptic locus $\calH_3$.\\\newpage
\bibliographystyle{alpha}
\bibliography{references.bib}

\end{document}